\newtheorem{theorem}{Theorem}[section]
\newtheorem{lemma}[theorem]{Lemma}
\newtheorem{proposition}[theorem]{Proposition}
\newtheorem{example}[theorem]{Example}
\newtheorem{conjecture}[theorem]{Conjecture}
\newtheorem{definition}[theorem]{Definition}
\newtheorem{question}[theorem]{Question}
\pgfplotsset{compat=1.8}
\def\NN{{\mathbb N}}
\DeclareMathOperator{\diam}{diam}
\DeclareMathOperator*{\ext}{extremal}
\def\epsilon{\varepsilon}
\let\eps=\varepsilon
\renewcommand{\dots}{\ldots}
\title{Maximum and minimum degree conditions for embedding trees}
\date{\today}
\author{Guido Besomi, Mat\'ias Pavez-Sign\'e\footnote{MPS was supported by ANID	Doctoral scholarship ANID-PFCHA/Doctorado Nacional/2017-21171132.}, and Maya Stein\footnote{MS is also affiliated to Centro de Modelamiento Matem\'atico, Universidad de Chile, UMI 2807 CNRS. MS acknowledges support by CONICYT + PIA/Apoyo a centros cient\'ificos y tecnol\'ogicos de excelencia con financiamiento Basal, C\'odigo AFB170001 and by Fondecyt Regular Grant 1183080.}\\ \ \\
Departamento de Ingenier\'ia Matem\'atica\\ Universidad de Chile\\  Beauchef 851, Santiago, Chile\\
\texttt{\{gbesomi, mpavez, mstein\}@dim.uchile.cl}
}
\begin{document}

\maketitle

\begin{abstract}
We propose the following conjecture: For every fixed $\alpha\in [0,\frac 13)$, each graph of minimum degree at least $(1+\alpha)\frac k2$ and maximum degree at least $2(1-\alpha)k$ contains each tree with $k$ edges as a subgraph.  
Our main result is an approximate version of the conjecture  for bounded degree trees and large dense host graphs. We also show that our conjecture is asymptotically best possible.\\
The proof of the approximate result relies on  a second result, 
  which might be  interesting on its own. Namely, we can embed any bounded degree tree into host graphs of minimum/maximum degree asymptotically exceeding $\frac k2$ and $\frac 43k$, respectively, as long as the host graph avoids a specific structure. 
\end{abstract}

\section{Introduction}

A central challenge in extremal graph theory is to determine degree conditions for subgraph containment. The aim is to find bounds on the average/median/minimum degree of a graph~$G$ which ensure  that $G$ contains all graphs of a fixed class $\mathcal H$ as subgraphs. One of the most interesting open cases is when $\mathcal H$ is the class of all trees of some fixed size $k\in\mathbb{N}$.  Let us give a quick outline of the most relevant directions that have been suggested in the literature.

\paragraph{Minimum degree.}
It is very easy to see that every graph of {\em minimum} degree at least $k$ contains each tree with $k$ edges, and this is sharp (consider the disjoint union of complete graphs of order $k$). 

\paragraph{Average degree.}
The famous Erd\H os--S\'os conjecture from 1964 (see~\cite{Erdos64}) states that every graph with {\em average} degree strictly greater than $k-1$ contains each tree with $k$ edges. If true, the conjecture is sharp. 

This conjecture has received a lot of attention over the last three decades, in particular, a proof  was announced by Ajtai, Koml\'os, Simonovits and Szemer\'edi in the early 1990's. Many particular cases have been settled since then, see e.g.~\cite{brandt96,sacle97,rohzon,BPS3,hax,goerlich2016}. 

\paragraph{Median degree.}
The Loebl--Koml\'os--S\'os conjecture from 1992 (see~\cite{Loebl95}) states that every graph of {\em median} degree at least $k$ contains each tree with $k$ edges.  If true, also this conjecture is sharp. For the  case $k=\frac n2$, Ajtai, Koml\'os and Szemer\'edi~\cite{AKS} proved an approximate version for large $n$, and years later Zhao~\cite{Zhao2011} proved the exact result for large $n$. 

An approximate version of the Loebl--Koml\'os--S\'os conjecture for dense graphs was proved by Piguet and Stein~\cite{PS12}. The exact version for dense graphs was settled by Piguet and Hladk\'y~\cite{LKSdense}, and independently by Cooley~\cite{Cooley:2009}. For sparse graphs, Hladk\'y, Koml\'os, Piguet, Szemer\'edi and Stein proved an approximate version of the Loebl--Koml\'os--S\'os conjecture in a series of four papers~\cite{LKS1, LKS2, LKS3, LKS4}.

\paragraph{Maximum and minimum degree.}

A new angle to the tree containment problem was introduced in 2016 by  Havet, Reed, Stein, and Wood~\cite{2k3:2016}. They impose bounds on both the minimum {\em and} the maximum degree. More precisely, they suggest that every graph of minimum degree at least $\lfloor{\frac{2k}{3}}\rfloor$ and maximum degree at least~$k$ contains each tree with $k$ edges. Again, this is sharp if true. We call this conjecture the {\em $\frac 23$--conjecture}, for progress see~\cite{2k3:2016, RS18a, RS18b}. 

In~\cite{BPS1}, the present authors  proposed a variation of this approach, conjecturing that every graph of  minimum degree at least $\tfrac{k}{2}$ and maximum degree at least $2k$ contains each tree with~$k$ edges. We call this  the {\em  $2k$--$\frac k2$ conjecture}. An example illustrating the sharpness of the conjecture, and a version  for trees with maximum degree bounded by $k^{\frac1{67}}$ and large dense host graph can be found in~\cite{BPS1}. 

\paragraph{New conjecture.}

Comparing the two variants of maximum/minimum degree conditions given by the previous two conjectures, it seems natural to ask whether one can allow for a wider spectrum of bounds for the maximum and the minimum degree of the host graph.  We believe that it is possible to weaken the bound on the maximum degree given by the  $2k$--$\frac k2$ conjecture, if simultaneously, the bound on the minimum degree is increased. Quantitatively speaking, we suggest the following.

\begin{conjecture}
\label{conj:ell}
Let $k\in\mathbb N$, let $\alpha\in[0,\frac 13)$ and let $G$ be a graph with $\delta(G)\geq  (1+\alpha)\frac k2$ and $\Delta (G)\geq 2(1-\alpha)k$. Then $G$ contains each tree with $k$ edges. 
\end{conjecture}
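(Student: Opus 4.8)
The plan is to argue along the two-regime strategy suggested by the two results announced in the abstract. Fix $\alpha\in[0,\frac13)$ and a tree $T$ with $k$ edges, and distinguish according to whether the host graph $G$ does or does not contain the extremal structure isolated in the second result of this paper (embedding bounded-degree trees when $\delta(G)$ and $\Delta(G)$ asymptotically exceed $\frac k2$ and $\frac43k$ and $G$ avoids a specific structure).

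\emph{The structure-free regime.} If $G$ avoids that structure, note that the hypotheses of Conjecture~\ref{conj:ell} are strictly stronger than those of the second result: indeed $\delta(G)\ge(1+\alpha)\frac k2\ge\frac k2$ and, crucially, $\Delta(G)\ge 2(1-\alpha)k>\frac43 k$ for every $\alpha<\frac13$. Hence it would suffice to lift the second result from (i) bounded-degree $T$ to arbitrary $T$, and from (ii) large dense $G$ to arbitrary $G$. For (i), a vertex of $T$ of large degree has only few large subtrees hanging from it; one peels these off, maps such a vertex onto a vertex of maximum degree in $G$, places its many small subtrees into that vertex's neighbourhood, and iterates, reducing to the bounded-degree case for the remainder. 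For (ii), the large dense case is handled by the regularity method as in the approximate result; the genuinely sparse case is the serious gap and would call for a sparse-regularity version or a direct combinatorial embedding, in the spirit of the sparse Loebl--Koml\'os--S\'os theorem.

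\emph{The structure-containing regime.} If $G$ does contain the extremal structure, this structure should hand us, besides a vertex $v$ with $\deg_G(v)\ge 2(1-\alpha)k$, a vertex set $W$ whose vertices share essentially $k$ common neighbours, i.e.\ a region behaving like a graph of minimum degree close to $k$, which (by the elementary fact from the introduction) accommodates any tree with at most about $k$ edges. I would decompose $T$ at a centroid $c$, so that every component of $T-c$ has at most roughly $\frac k2$ edges, map $c$ to $v$, send the single-vertex and other tiny components to distinct neighbours of $v$, and then partition the bulky components into groups; how many groups, how large, and which are routed into $W$ versus embedded greedily from $N(v)$ using only $\delta(G)\ge(1+\alpha)\frac k2$, would be chosen as a function of $\alpha$. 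As $\alpha$ grows the minimum-degree side can shoulder more of $T$; as $\alpha\to 0$ almost everything must be funnelled through the almost-$2k$-degree hub $v$, which is exactly the content of the $2k$--$\frac k2$ conjecture. The slack $(1+\alpha)\frac k2-\frac k2$ in the minimum degree and $2(1-\alpha)k-\frac43k$ in the maximum degree is what pays for the part of $T$ that neither $W$ nor a single greedy pass can absorb.

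\emph{The main obstacle.} As in the approximate theorem, the crux is the bookkeeping in the structure-containing regime. Since $\delta(G)$ is only marginally above $\frac k2$ one cannot embed $T$ greedily as a whole, so the decomposition of $T$, the order of the embedding, and the choice of which vertices of $N(v)$ and of $W$ are consumed must be coordinated so that (a) every bulky component still has room when its turn comes, (b) the images of distinct components stay disjoint, and (c) the interface between the part embedded inside $W$ and the rest does not overload $W$. Carrying this out uniformly in $\alpha$, without a case analysis that erodes the constants, is the real difficulty — and is presumably why, for now, only the approximate, bounded-degree, dense form is within reach.
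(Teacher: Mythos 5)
You have not proved Conjecture~\ref{conj:ell}, and neither does the paper: the statement you were asked about is explicitly an open \emph{conjecture}, for which the paper only proves (a) an asymptotic sharpness result (Proposition~\ref{prop:ell'}) and (b) an approximate version for bounded-degree trees in large dense host graphs (Theorem~\ref{main}, via Theorem~\ref{main:2-2}). So there is no ``paper's own proof'' to compare against, and your write-up, which you yourself flag as incomplete (``the genuinely sparse case is the serious gap,'' ``the real difficulty''), is a proof \emph{plan} rather than a proof.

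That said, the plan is strategically aligned with what the paper actually does for the approximate case: Theorem~\ref{main} is indeed derived by first applying Theorem~\ref{main:2-2} (the $\frac43 k$-threshold dichotomy) and then, in the extremal regime, splitting $T$ at a $\frac{t}{2}$-separator and routing the pieces into the two components $C_1$ and $C_2$ that $x$ sees. Where your sketch drifts from the paper is in the structure-containing regime: you imagine the extremal structure supplying a set $W$ whose vertices share about $k$ common neighbours, but the extremal configuration of Definition~\ref{defii} gives no such $W$. It gives a bipartite cluster component $C_1=(A,B)$ with $|A|$ large and all components $(k,\cdot)$-small, and the paper's argument uses $C_1$ via the forest-embedding Lemma~\ref{emb:forest} (not a minimum-degree-$k$ greedy argument) together with the second component $C_2$ for the remainder of $T$, with a case split depending on whether some component of $T-z$ occupies more than $\alpha k$ of the even levels. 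Your two genuine gaps — removing the $\Delta(T)\le k^{1/67}$ restriction and handling the sparse/small host graphs without regularity — are exactly the obstacles the paper also identifies as open (Section~\ref{sec:conclusion}), so your assessment of where the difficulty lies is sound, but the conjecture remains unproved.
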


Note that for $\alpha= 0$,  the bounds  from Conjecture~\ref{conj:ell} conincide with the bounds from the $2k$--$\frac k2$ conjecture. In contrast, the case $\alpha=\frac 13$ is not included in Conjecture~\ref{conj:ell} as we believe that the appropiate value for the maximum degree is $k$ and not
 $\frac{4k}3$ if the minimum degree is $\frac 23k$ (as suggested by the $\frac 23$-conjecture).

We show that Conjecture~\ref{conj:ell} is asymptotically best possible for infinitely many values of~$\alpha$.

\begin{proposition}\label{prop:ell'}
For all odd $\ell\in\mathbb N$ with $\ell\ge 3$, and for all $\gamma>0$ there are $k\in \mathbb N$, a $k$-edge tree~$T$, and a graph $G$ with $\delta(G)\geq  (1+\frac 1\ell -\gamma)\frac k2$ and $\Delta (G)\geq 2(1-\frac 1\ell-\gamma)k$ such that $T$ does not embed in $G$. 
\end{proposition}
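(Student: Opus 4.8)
The goal is a tightness construction. We need, for each odd $\ell\ge 3$ and each $\gamma>0$, a tree $T$ and a host graph $G$ with $\delta(G)$ just above $(1+\frac1\ell)\frac k2$ and $\Delta(G)$ just above $2(1-\frac1\ell)k$, yet $T\not\subseteq G$. Let me think about what obstructs embedding.

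The natural idea: take $G$ to be a disjoint union of cliques of a carefully chosen size, plus possibly one special large-degree vertex or small gadget to realize the maximum degree bound. The minimum degree bound $\delta(G) \gtrsim (1+\frac1\ell)\frac k2$ forces the cliques to have order roughly $(1+\frac1\ell)\frac k2$, i.e.\ each component is too small to contain a path/tree on $k$ edges unless it is the special component. So choose $T$ to be a tree that (i) has $k$ edges, (ii) is "spread out" enough that it cannot fit into any single small clique of order about $(1+\frac1\ell)\frac k2+O(1)$, and (iii) cannot be accommodated by the one special vertex of degree $2(1-\frac1\ell)k$ either — because the high-degree vertex has too few neighbours to host all the leaves/branches of $T$ hanging off a central structure, once $\ell\ge 3$ makes $2(1-\frac1\ell)k < k$. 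Concretely I would take $T$ to be a spider or a "broom": a long bare path of length about $\ell$ blocks glued together, or a star-like tree whose centre needs degree $k$ but where the available high-degree vertex only offers $2(1-\frac1\ell)k<k$ neighbours.

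Here is the plan in order. First, fix $\ell$ odd and $\gamma>0$; choose $k$ large (divisible by a convenient quantity, e.g.\ $2\ell$). Second, design $T$: I expect the right choice is a tree obtained from $\ell$ disjoint "units" of size $\tfrac k\ell$ joined through a central structure, so that any embedding must either (a) fit a unit into a small clique, which fails by a pigeonhole/counting argument since the unit plus its attachment point exceeds the clique order, or (b) route many units through the single high-degree vertex, which fails because its degree $2(1-\frac1\ell)k = (1-\frac1\ell)\cdot 2k$ is strictly less than what is needed to fan out $\ell$ units each contributing $\Theta(k/\ell)$ vertices at distance one — the arithmetic $\ell\cdot(\text{something})>2(1-\frac1\ell)k$ is where the oddness of $\ell$ and the exact constant $\frac1\ell$ enter. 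Third, build $G$: take $t$ disjoint cliques each on exactly $\lceil (1+\frac1\ell-\gamma)\frac k2\rceil +1$ vertices (so $\delta$ is as required and each clique is slightly too small for a unit-with-attachment), and then add one extra vertex $v$ joined to $2(1-\frac1\ell-\gamma)k$ vertices distributed among the cliques (this provides $\Delta(G)$, and $v$'s neighbourhood is spread thinly so it cannot serve as the branch vertex). Fourth, verify $T\not\hookrightarrow G$: any image of $T$ lies in the component of $v$ (all other components are cliques too small even to hold a single unit once we count the attaching vertex, if we make units have $\lceil(1+\frac1\ell)\frac k2\rceil$ vertices); inside that component, $v$ is the only vertex of degree exceeding the clique order, so the branch vertices of $T$ must map to $v$ or into cliques, and a degree/size count rules both out.

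The main obstacle is calibrating the three parameters simultaneously: the clique order must be small enough that a "unit" of $T$ does not fit (this caps it near $\frac k2$, consistent with the $\delta$ bound only because of the extra $+\frac1\ell\cdot\frac k2$ slack we are allowed), while $T$ must still genuinely have $k$ edges and be a tree (so the units must be glued, and the glue must not create a cheap embedding using $v$). Making the "$v$ cannot help" argument airtight is the delicate point: one must show that the $2(1-\frac1\ell)k$ neighbours of $v$, even combined with the internal edges of the cliques they lie in, cannot absorb more than a bounded number of units, so that at least one unit is forced entirely into one small clique — contradiction. I expect this to reduce, after the dust settles, to a single inequality of the form $\ell\cdot\lceil(1+\tfrac1\ell)\tfrac k2\rceil > k + 2(1-\tfrac1\ell)k + O(1)$, which holds precisely when $\ell$ is odd and $\ge 3$ (for even $\ell$ or $\ell=1$ the slack disappears or the construction collapses, matching the remark that $\alpha=\frac13$ is excluded). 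I would present the tree and graph explicitly, then do this counting, keeping $\gamma$ as a uniform additive cushion throughout.
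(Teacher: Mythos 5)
There is a genuine gap: the host graph you propose does not actually obstruct the embedding, and the reason it fails is precisely the idea that the paper's construction is built around. You take $G$ to be disjoint cliques of order roughly $(1+\frac1\ell)\frac k2$ together with one special vertex $v$ whose $2(1-\frac1\ell)k$ neighbours are spread over the cliques. But with cliques as components, the natural spider tree (a centre joined to $\ell$ star centres, each star of size $\frac k\ell$) embeds easily: map the centre to $v$, map each star centre to a neighbour of $v$, and absorb the leaves of each star into the surrounding clique, which has order about $(1+\frac1\ell)\frac k2 \gg \frac k\ell$. Your ``$v$'s neighbourhood is spread thinly'' condition does not help, since after one neighbour is chosen the rest of the star uses clique edges, not edges incident to $v$. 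Your proposed concluding inequality $\ell\cdot\lceil(1+\frac1\ell)\frac k2\rceil > k+2(1-\frac1\ell)k+O(1)$ is also false for $\ell=3$ (it reads $2k>\frac73k$), and the ``units'' of $T$ cannot have size $\approx(1+\frac1\ell)\frac k2$ as you at one point need, since $\ell\ge 3$ such units would give a tree with far more than $k$ edges.

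What the paper does instead, and what your sketch misses, is to make the small components \emph{complete bipartite}, $H_i=(A_i,B_i)$ with $|A_i|\approx(1-\frac1\ell)k$ and $|B_i|\approx\frac k2$, and to join the maximum-degree vertex $x$ only to the $A$-sides. Then mapping the spider centre to $x$ forces the $\ell$ star centres into $A_1\cup A_2$ and the stars' leaves into $B_1\cup B_2$. Since $\ell$ is odd, at least $\frac{\ell+1}2$ stars land in one $H_i$, and their $\frac{\ell+1}{2}(\frac k\ell-1)>\frac k2$ leaves overflow $B_i$. Mapping the centre inside $H_i$ is ruled out by a similar count. This asymmetry — $x$ sees only the side that cannot hold the leaves — is the essential structural idea; replacing the bipartite components by cliques, as you do, destroys the obstruction. (The paper's Example~\ref{example2} does replace \emph{one} of the two bipartite blocks by a clique, which still works because the other block keeps the asymmetry, but with weaker bounds.) Finally, the parameter $c$ (taking $|B_i|$ slightly above $\frac k2$ and letting $k=c\ell(\ell+1)\to\infty$) is what pushes the minimum degree up to $(1+\frac1\ell-\gamma)\frac k2$, a calibration step absent from your sketch.
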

We prove Proposition~\ref{prop:ell'} in Section~\ref{tightness}. Note that Proposition~\ref{prop:ell'} covers all values of $\alpha\in\{ \frac 15, \frac 17, \frac 19, \ldots\}$.  The tightness of our conjecture for other values of $\alpha$ will be discussed in Section~\ref{tight?}. We remark that Proposition~\ref{prop:ell'} disproves a conjecture
 from~\cite{rohzon} (see Section~\ref{tightness} for details).

 On the positive side, observe that Conjecture~\ref{conj:ell} trivially holds for stars and for  double stars. Also, it is not difficult to see that the conjecture  holds for paths. In fact, if the tree we wish to embed is the $k$-edge path $P_k$, it already suffices to require a minimum degree of  at least $\frac k2$ and a maximum degree of at least  $k$ in the host graph $G$.\footnote{This is enough because the latter condition forces  a component of size at least $k+1$, and thus the statement reduces to a well known result of Erd\H os and Gallai~\cite{ErdGall59}.}

We provide further evidence for the correctness of Conjecture~\ref{conj:ell} by  proving an approximate version  for bounded degree trees and large dense host graphs. 

\begin{theorem}\label{main} For all $\delta \in(0,1)$ there exist $k_0\in \NN$ such that for all $n,k\ge k_0$ with $n\ge k\ge \delta n$ and for each $\alpha\in [0,\frac 13)$ the following holds.\\ Every $n$-vertex graph $G$ with 
 $\delta(G)\geq(1+\delta)(1+\alpha)\frac{k}{2}$ and 
 $\Delta(G)\geq 2(1+\delta)(1-\alpha)k$
contains each
 $k$-edge tree $T$ with $\Delta(T)\le k^{\frac{1}{67}}$ as a subgraph.
\end{theorem}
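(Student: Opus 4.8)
First I need to recall the structure of proofs of this type: they're absorption/regularity-based arguments for embedding spanning or almost-spanning trees. Since $k \ge \delta n$, the tree $T$ is linear in $n$, so we are embedding an almost-spanning bounded-degree tree. The standard toolkit is Szemerédi's regularity lemma together with an embedding lemma for trees in "reduced graphs", plus a method to handle the small leftover (either absorption, or the fact that trees have many leaves so the last bit can be embedded greedily). The two degree hypotheses — $\delta(G)$ large and one vertex of huge degree $\Delta(G)$ — must be exploited in tandem: the minimum degree condition gives us a dense, well-connected bulk to work in, while the maximum-degree vertex $w$ (with $\deg(w) \ge 2(1+\delta)(1-\alpha)k$) is used to absorb a large "broom" or high-multiplicity part of the tree hanging off one vertex.

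**Key steps.** The plan is as follows. (1) Decompose the tree $T$: find an edge or a small separator splitting $T$ into a "large" part $T_1$ and a "small" part $T_2$, where the split is chosen so that $T_2$ has a controlled number of edges and attaches to $T_1$ at one vertex. More usefully, following the approach in the companion paper \cite{BPS1}, split $T$ at a centroid-like vertex into pieces each with roughly $\le k/2$ edges, or into a part to be embedded into the neighborhood of the high-degree vertex $w$ and a part embedded into the bulk. The balance point $(1+\alpha)k/2$ vs $2(1-\alpha)k$ should correspond to: embed a subtree with about $(1-\alpha)k$ edges greedily into $N(w)$ (possible since $\deg(w) \gtrsim 2(1-\alpha)k$ and bounded degree means a BFS layering fits), and embed the remaining $\lesssim \alpha k + \text{something}$... — actually the arithmetic needs the min-degree part to carry a subtree of up to $k$ edges when $\alpha$ is small, so the real split is: a subtree $T_w$ of size $\approx$ (small) rooted so as to use $w$, and the rest into the $\delta(G) \ge (1+\delta)k/2$ bulk. (2) Apply the regularity lemma to $G$ (or to $G$ minus $w$ and a bounded set), pass to the reduced graph $R$, and observe the inherited minimum-degree condition on $R$. (3) Use the second result advertised in the abstract — embedding bounded-degree trees into host graphs with min/max degree asymptotically exceeding $k/2$ and $4k/3$, provided a certain obstruction is absent — as the main engine for the bulk part. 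The case analysis is: either $G$ contains that obstruction, in which case its special structure (two large cliques, or a specific bipartite-like configuration) directly yields the embedding by an ad hoc argument exploiting $\Delta(G)$; or $G$ avoids it, and the second result applies directly once we check $\frac 43 k \le 2(1-\alpha)k$, i.e. $\alpha \le \frac 13$ — which is exactly the hypothesis. (4) Glue the pieces: embed $T_2$ first in the special location (around $w$ or in the obstruction), then extend into the regular bulk, using that the connecting vertex has enough degree into a suitable cluster, and finish with the leaves using the regularity/greedy argument.

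**The main obstacle.** The hard part will be the interface between the "$w$-part" and the "bulk part": ensuring that after embedding the subtree hanging off the high-degree vertex into $N(w)$, the unique vertex where it reconnects to the rest of $T$ lands on a vertex of $G$ that still has large degree into the regularized bulk where $T_1$ must go. This is a non-trivial connectivity/matching condition and is presumably where the error term $\delta$ and the degree bounds $(1+\delta)(1+\alpha)\frac k2$, $2(1+\delta)(1-\alpha)k$ are really consumed. A secondary difficulty is bookkeeping the budget: one must verify that over the whole range $\alpha \in [0,\frac13)$ the split of $T$ into "$\le$ size usable by $w$" and "$\le$ size usable by the min-degree bulk" is always feasible for every bounded-degree tree — this requires a structural lemma on trees (every bounded-degree $k$-edge tree can be cut at one vertex into pieces of prescribed sizes up to $O(\Delta(T))$ error), which should follow from a standard centroid/DFS argument but needs to be stated carefully. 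I expect the reduction to the second theorem (the obstruction-avoidance result) to be the conceptual core, with everything else being regularity-method routine, modulo careful constants.
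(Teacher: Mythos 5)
Your high-level reduction is exactly the paper's: apply the regularity lemma around a vertex $x$ of maximum degree, invoke Theorem~\ref{main:2-2} (since $2(1+\delta)(1-\alpha)k\ge (1+\delta)\tfrac{4k}{3}$ precisely because $\alpha<\tfrac13$), and then either the tree embeds outright or $G$ is $(\eps,x)$-extremal, in which case one uses the special structure around $x$. You also correctly anticipate that one needs a structural separator lemma for trees. Up to this point the routes coincide.

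The gap is that the extremal case --- which is essentially the entire content of the proof of Theorem~\ref{main} once Theorem~\ref{main:2-2} is in hand --- is left as ``an ad hoc argument exploiting $\Delta(G)$'' and the sketch you give for it would not work. First, the obstruction is not ``two large cliques'': Definition~\ref{defii} says $x$ $\sqrt\eps$-sees exactly two components $C_1,C_2$ of the reduced graph of $G-x$, with $C_1=(A,B)$ bipartite, $(\tfrac{2k}{3}\cdot\tfrac{|R|}{|G|},\sqrt[4]\eps)$-large, and $x$ seeing only $A$. Second, the proposal to ``embed a subtree with about $(1-\alpha)k$ edges greedily into $N(w)$'' fails because $N(x)$ need not be dense or even connected; the maximum-degree vertex is only used to host the separator $z$ (or a neighbour of it) and to guarantee that $A$ is large, namely $|\bigcup A|\gtrsim(1-\alpha)k$ via~\eqref{size_of_A}. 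What actually carries the day is the following: take a $\lceil t/2\rceil$-separator $z$ of $T$ (Lemma~\ref{lem:cut1}), look at $V_0$, the vertices of $T-z$ at even distance from $z$, and split into cases on whether $|V_0|\ge(1+\alpha)\tfrac k2$ and whether some component $F^*$ of $T-z$ has $|V(F^*)\cap V_0|>\alpha k$. The components of $T-z$ are then partitioned using Lemma~\ref{lem:num} and embedded into $C_1$ (colour class $V_0$ to $A$, the other to $B$) and into $C_2$ via Lemma~\ref{emb:forest}, with the roots landing in the $G$-neighbourhood of $x$. The arithmetic that makes this work is exactly the interplay between $|A|\gtrsim(1-\alpha)k\cdot\tfrac{|R|}{|G'|}$ (from $\Delta(G)$) and $\delta(R)\gtrsim(1+\alpha)\tfrac k2\cdot\tfrac{|R|}{|G'|}$ (from $\delta(G)$), which you gesture at but do not pin down; without this casework there is no proof, since the partitioning constraints on $T-z$ are nontrivial and the naïve ``put $(1-\alpha)k$ edges near $w$'' split does not respect the bipartition sizes of $C_1$.
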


The proof of Theorem~\ref{main} will be given in Section~\ref{sec:mainproof}. It relies on another result, namely Theorem~\ref{main:2-2} below, which we already prove in Section~\ref{4/3}, making use of a powerful embedding tool from~\cite{BPS1} (Lemma~\ref{lem:superlemma}).  We believe Theorem~\ref{main:2-2} might be interesting in its own right. 

Theorem~\ref{main:2-2} is a variant of Theorem~\ref{main}, but with the much weaker conditions
$\delta(G)\geq(1+\delta)\frac{k}{2}$ and 
 $\Delta(G)\geq(1+\delta)\frac 43k$.
Because of Proposition~\ref{prop:ell'}, these bounds are not sufficient to guarantee an embedding of any given tree $T$, but if we are not able to embed~$T$, then some  information about the structure of~$G$ can be deduced. Indeed, one can prove that, after deleting few edges and vertices, $G$ looks like the extremal graph from Proposition~\ref{prop:ell'}. Roughly speaking, an extremal graph $H$ is a graph with a vertex $x$ of high degree which sees only two components of $H-x$. The larger of such components is bipartite and $x$ sees only the larger bipartite class which, moreover, has size at least $\frac23k$. The other component may be bipartite, in which case $x$ sees only one bipartition class, or non-bipartite and with size at most $\frac 23k$.   We will give an explicit description of the corresponding class of graphs, which we will call {\em $(\eps,x)$-extremal graphs}, in Definition~\ref{defii} in Section~\ref{4/3}, but already state our result here.

\begin{theorem}\label{main:2-2} 
For all $\delta \in(0,1)$ there is $n_0\in \NN$  such that for all $k, n\ge n_0$  with $n\ge k\ge \delta n$, the following holds for every $n$-vertex graph $G$ with $\delta(G)\geq (1+\delta)\frac{k}{2}$ and $\Delta (G)\geq (1+{\delta})\tfrac{4k}{3}$. \\ If $T$ is a $k$-edge tree with  $\Delta(T)\le k^{\frac{1}{67}}$, then either 
\begin{enumerate}[(a)]
	\item $T$ embeds in $G$; or 
	\item $G$ is $(\frac{\delta^4}{10^{10}},x)$-$\ext$ for every $x\in V(G)$ of degree at least $(1+\delta)\tfrac{4k}{3}$. \end{enumerate}
\end{theorem}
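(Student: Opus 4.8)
The plan is to fix an arbitrary vertex $x\in V(G)$ with $d(x)\ge(1+\delta)\tfrac{4k}{3}$ and to show that either $T$ embeds in $G$, or $G$ is $(\tfrac{\delta^4}{10^{10}},x)$-$\ext$; since $x$ was arbitrary, this is exactly the dichotomy in the statement. When an embedding exists it will be produced by the embedding tool of~\cite{BPS1}, Lemma~\ref{lem:superlemma}, which reduces the embedding of a bounded-degree $k$-edge tree into a large dense graph to finding a suitable \emph{skeleton} in $G$: informally, a vertex of large degree whose neighbourhood contains two sizeable, robustly connected pieces that are linked to each other, into which the two halves of $T$ can be pushed greedily while exploiting the minimum-degree hypothesis. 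So the real content is the \emph{converse}: showing that if $G$ contains no such skeleton around $x$, then the local structure of $G$ at $x$ must be of the very restricted form recorded in Definition~\ref{defii}.

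On the tree side, I would first cut $T$ into two subtrees $T_1,T_2$ meeting in exactly one vertex $w$ (the ``link vertex''), rooting each $T_i$ at $w$. Using a centroid of $T$ and regrouping the components hanging off it, and using that $\Delta(T)\le k^{1/67}$ is negligible, one can make the split balanced up to a lower-order term, so that $e(T_1)$ and $e(T_2)$ lie in whatever window Lemma~\ref{lem:superlemma} requires (both being at most $(\tfrac12+o(1))k$, say). Since $2\cdot\tfrac{2k}{3}<(1+\delta)\tfrac{4k}{3}$ with room to spare, the degree of $x$ is large enough to host both halves while the link vertex $w$ is placed in $N(x)$; and $\delta(G)\ge(1+\delta)\tfrac k2$ provides exactly the slack needed to extend each rooted half level by level. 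Feeding $G$ into the (regularity-based) machinery underlying Lemma~\ref{lem:superlemma} is where the density assumption $k\ge\delta n$ enters.

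Next I would set $N:=N(x)$ and ask whether $N$ can be (almost) split into two disjoint sets $W_1,W_2$, each of size at least roughly $(\tfrac12+\delta')k$, such that each $G[W_i]$ — combined with the minimum-degree condition — has the expansion/connectivity that Lemma~\ref{lem:superlemma} demands, and such that there is enough edge-structure between $W_1$ and $W_2$ (or routed through $x$) to glue the two halves of $T$ at $w$. If such $W_1,W_2$ exist, Lemma~\ref{lem:superlemma} yields $T\subseteq G$. If they do not, I would argue that this failure rigidly constrains $G$: using $\delta(G)\ge(1+\delta)\tfrac k2$ and the impossibility of the split, one deduces that $N$ decomposes — after discarding at most a $\tfrac{\delta^4}{10^{10}}$-fraction of exceptional vertices — into a bounded collection of clusters whose mutual adjacency pattern, and whose adjacency to $V(G)\setminus N$, is precisely the one described in Definition~\ref{defii}; that is, $G$ is $(\tfrac{\delta^4}{10^{10}},x)$-$\ext$.

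The main obstacle is this last step: reverse-engineering the extremal class. One must show that \emph{every} way in which $G$ could fail to be $(\tfrac{\delta^4}{10^{10}},x)$-$\ext$ — for instance $N$ containing a large robustly connected piece of the ``wrong'' intermediate size, or substantial expansion from $N$ to the outside, or two medium pieces that happen to be well-linked — already supplies the skeleton needed by Lemma~\ref{lem:superlemma}. Excluding all of these alternatives simultaneously is the delicate case analysis at the heart of the proof, and making the bookkeeping consistent (the constant $\tfrac{\delta^4}{10^{10}}$, the $o(1)$ error coming from $\Delta(T)\le k^{1/67}$, and the interaction with $k\ge\delta n$) is routine but lengthy. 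Compared with it, the tree-splitting and the direct application of Lemma~\ref{lem:superlemma} are comparatively soft.
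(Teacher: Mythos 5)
You have the right key tool, Lemma~\ref{lem:superlemma}, and you correctly identify that the heart of the argument is showing that failure of every embeddable configuration forces the structure in Definition~\ref{defii}. But the route you sketch around that tool is not the paper's, and as written it has two genuine problems.

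First, the tree-splitting step is superfluous and is a misreading of what Lemma~\ref{lem:superlemma} does. The lemma is stated for an \emph{arbitrary} $k$-edge tree of bounded maximum degree; its hypotheses are purely about the reduced graph $R$ of $G-x$ (a large non-bipartite component, a large bipartite component with $x$ seeing both sides, two components seen by $x$ plus one of conditions (a)--(d)). All tree manipulation happens internally in~\cite{BPS1}. The proof of Theorem~\ref{main:2-2} never touches $T$ at all: one only needs to verify hypotheses about $G$. Cutting $T$ at a link vertex $w$, worrying about the sizes of $T_1,T_2$, etc., is work that belongs to the proof of Theorem~\ref{main} (and to the internals of the superlemma), not here.

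Second, framing the dichotomy as a question about splitting $N(x)$ into two ``robustly connected'' sets $W_1,W_2$ does not match either the hypotheses of the superlemma or the conclusion one must reach. Definition~\ref{defii} records \emph{global} properties of an $(\eps,5\sqrt\eps)$-reduced graph $R$ of $G-x$: that \emph{every} component of $R$ is $(k\cdot\frac{|R|}{|G|},\sqrt[4]\eps)$-small (including ones $x$ does not see); that $x$ $\sqrt\eps$-sees exactly two components; that the larger of these is bipartite, $(\tfrac{2k}{3}\cdot\frac{|R|}{|G|},\sqrt[4]\eps)$-large and seen only on its larger side; and constraints on the second component. None of this can be read off from a partition of $N(x)$ — components of $R$ reach far outside $N(x)$, and their bipartiteness and size are exactly the kind of data that cannot be localised to the neighbourhood of $x$. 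The phrase ``$N$ decomposes into a bounded collection of clusters whose adjacency pattern is precisely the one in Definition~\ref{defii}'' is where the proposal quietly assumes what needs to be proved.

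The paper instead does something more direct: apply the degree form of the regularity lemma to $G-x$, set $\eps=\delta^4/10^{10}$, and then walk through the list of scenarios in Lemma~\ref{lem:superlemma} one at a time. If $R$ has a large component, scenario (I) embeds $T$; so all components are small, giving (i) of Definition~\ref{defii}. If $x$ $\sqrt\eps$-sees only one component, its degree is too large for that component (scenario (II)); if it sends an edge to a third, scenario (III.a) fires. Hence $x$ sees exactly two components $C_1,C_2$, giving (ii). Then bipartiteness of $C_1$, which side $x$ sees, and the size bounds on its two sides fall out of ruling out (III.b)--(III.d), giving (iii); and (iv) for $C_2$ follows the same way. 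The correspondence between ``scenario $(\cdot)$ fails'' and ``extremality condition $(\cdot)$ holds'' is one-to-one; there is no separate connectivity or $W_1/W_2$ analysis. You should redo your sketch to live in the reduced graph $R$ rather than in $N(x)$, and drop the tree decomposition entirely.
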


We discuss some further directions and open problems in Section~\ref{sec:conclusion}. More precisely, we discuss possible extensions of Theorems~\ref{main} and the influence of some additional assumptions on the host graph, such as higher connectivity, on the degree bounds from Theorem~\ref{main}. Also, we discuss the sharpness of  Conjecture~\ref{conj:ell} for those values of $\alpha$ not covered by Proposition~\ref{prop:ell'}.

\section{Sharpness of Conjecture~\ref{conj:ell}}\label{tightness}

This section is devoted to showing the asymptotical tightness of our conjecture,  for infinitely many values of $\alpha$.
 In order to prove Proposition~\ref{prop:ell'}, let us consider the following example.
 
\begin{figure}[h!]
\centering
\includegraphics[width=0.5\linewidth]{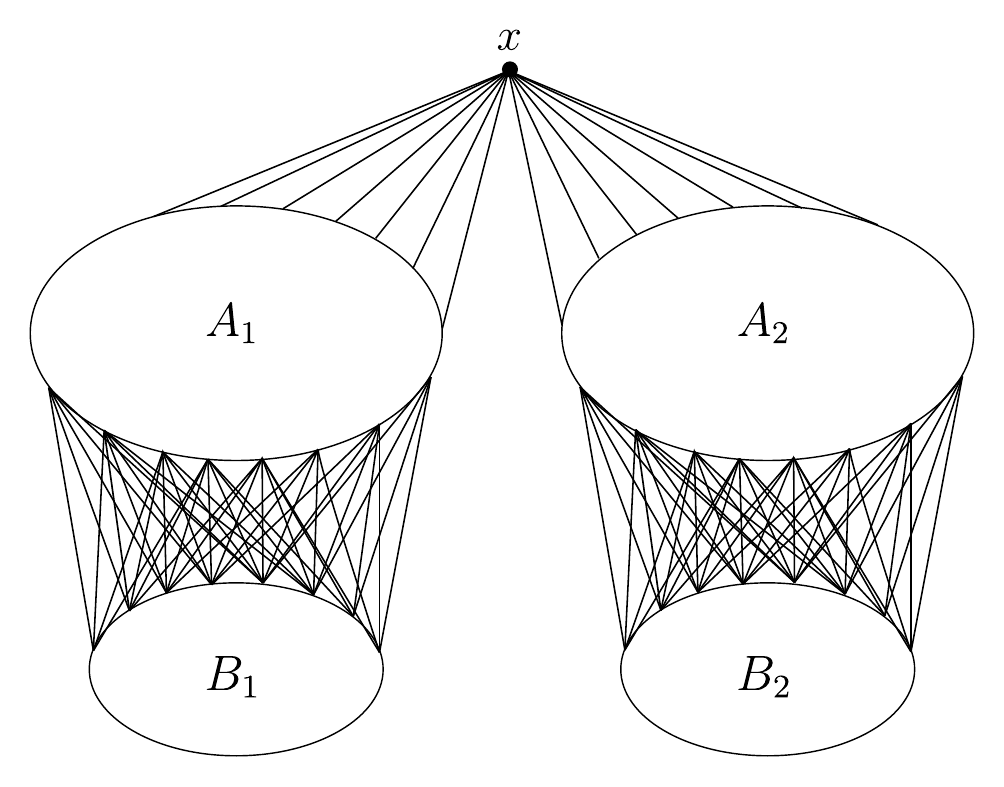}
\caption{The graph $H_{k,\ell,c}$ from Example~\ref{ex:extremal_4k/3}.}
\label{fig:Hklc}
\end{figure}

\begin{example}\label{ex:extremal_4k/3}
Let $\ell, k, c\in\NN$  with $1\le c\le \frac k{\ell(\ell +1)}$ such that $\ell\ge 3$ is odd and divides $k$. \\
For $i=1,2$, we define $H_i=(A_i, B_i)$ to be the complete bipartite graph with $$|A_i|=(\ell-1)\left(\frac{k}{\ell}-1\right)\text{\ and \ }|B_i|=\frac k2+\frac{(c-1)(\ell +1)}2-1.$$ 
We obtain $H_{k,\ell,c}$ by adding a new vertex $x$ to  $H_1\cup H_2$, and adding all edges between~$x$ and $A_1\cup A_2$. Observe that 
$$\delta (H_{k,\ell,c})=\min\{|A_1|,|B_1|+1\}=|B_1|+1=\frac k2+\frac{(c-1)(\ell +1)}2$$ 
and 
$$\Delta (H_{k,\ell,c})= |A_1\cup A_2|=2(\ell-1)\left(\frac{k}{\ell}-1\right).$$ 
\end{example}
\noindent Let us now prove Proposition~\ref{prop:ell'}.

\begin{proof}[Proof of Proposition~\ref{prop:ell'}]
Let $\ell$ and $\gamma$ be given, and consider an arbitrary $c\geq 1$. Set 
	\[k:=c\ell(\ell+1).\]
Let $T_{k,\ell}$ be the tree formed by $\ell$ stars of order $\frac{k}{\ell}$ and an additional vertex~$v$ connected to the centres of the stars. We claim that we cannot embed $T_{k,\ell}$ in $H_{k,\ell,c}$. Observe that we cannot embed $T_{k,\ell}$ in $H_{k,\ell,c}$ by mapping $v$ into $x$, since then one of the sets $B_{i}$ would have to accommodate all  leaves of at least $\frac{\ell +1}2$ of the  stars of order $\frac{k}{\ell}$. But these are at least  $$\frac{\ell +1}2\cdot \left(\frac k\ell-1\right)=\frac k2+\frac 1{2\ell}(k-\ell(\ell +1))\ge \frac k2+\frac 12(c-1)(\ell +1)> |B_i|$$ leaves in total, so they will not fit into $B_i$.

Moreover, we cannot
map $v$  into one of the $H_i$, because then we would have to embed at least $\ell-1$ stars into $H_i$. The leaves of these stars would have to go to the same side as $v$, but together these are $$(\ell -1)\left(\frac{k}{\ell} -1\right)+1>|A_i|\ge |B_i|$$ vertices (note that we count $v$), so this, too,   is impossible.
We conclude that the tree $T_{k,\ell}$ does not embed in $H_{k,\ell,c}$. Now observe that 
$$\delta (H_{k,\ell,c})>\Big(1+\frac{(c-1)(\ell +1)}k\Big)\frac k2=\Big(1+\frac{c-1}{c\ell}\Big)\frac k2=\Big(1+\frac{1}{\ell}-\frac{1}{c\ell}\Big)\frac k2$$
and
$$\Delta (H_{k,\ell,c})
=2\Big(1-\frac 1\ell-\frac{\ell-1}k\Big)k> 2\Big(1-\frac 1\ell-\frac 1{c\ell}\Big)k.$$ 
So, if we choose $c$ large enough such that
\[\delta (H_{k,\ell,c})\ge \Big(1+\frac 1\ell -\gamma\Big)\frac k2\hspace{.5cm}\text{and}\hspace{.5cm}\Delta (H_{k,\ell,c})\ge2\Big(1-\frac 1\ell-\gamma\Big)k,\]
which is as desired.
 \end{proof}
 
Note that Proposition~\ref{prop:ell'} disproves a conjecture
 from~\cite{rohzon} which suggested that any graph with maximum degree at least $\frac 43k$ and minimum degree at least $\frac k2$ contains each $k$-edge tree. Actually, a version of Proposition~\ref{prop:ell'} with $\gamma=\frac 1\ell$ (which is slightly easier to prove - just take $c=1$) would be sufficient to disprove the conjecture
 from~\cite{rohzon}.

Let us now quickly discuss an alternative example, which gives worse bounds
 than the ones given in Proposition~\ref{prop:ell'}, but might be interesting because of its different structure.

\begin{example}\label{example2}
Let $k,\ell, c$ be as in Example~\ref{ex:extremal_4k/3}. Let $C$ be a complete graph of order  $\frac k2 +\frac{(c-1)(\ell +1)}2$. Let $G_{k, \ell, c}$ be obtained by taking  $C$ and the bipartite graph  $H_1=(A_1, B_1)$ from Example~\ref{ex:extremal_4k/3}, and joining a new vertex $x$ to all vertices from $A_1$ and to all vertices in $C$.  Then $\delta(G_{k, \ell, c})=\frac k2+\frac{(c-1)(\ell +1)}2$ and $\Delta(G_{k, \ell, c})=\frac{3\ell-2}{2\ell}k+\frac{(c-3)(\ell +1)}2-2$.

Moreover, in the same way as in the proof of Proposition~\ref{prop:ell'}, we can show that if $k$ is large enough in terms of (odd) $\ell\ge 3$ and~$\gamma$, then $$\delta(G_{k, \ell, c})\ge (1+\frac 1\ell -\gamma)\frac k2\text{ \ and \ }\Delta(G_{k, \ell, c})\ge\frac 32(1-\frac 1{\ell}-\gamma)k$$
and $T_{k,\ell}$ does not embed into $G_{k, \ell, c}$. 
 \end{example}

This example, as well as the example underlying Proposition~\ref{prop:ell'}, illustrates that requiring a maximum degree of at least $ck$, for any $c<2$ (in particular for $c=\frac{4}{3}$), and a minimum degree of at least  $\frac k2$ is not enough to guarantee that any graph  obeying these conditions contains all $k$-edge tree as subgraphs. Nevertheless, we were not able to come up with any radically different examples, and it might be that graphs that look very much like the graph $H_{k,\ell,c}$ from Example~\ref{ex:extremal_4k/3} or the graph  $G_{k,\ell,c}$ from Example~\ref{example2}  are the only obstructions for embedding all $k$-edge trees. 
This suspicion is partially confirmed by  Theorem~\ref{main:2-2}.

\section{Regularity}\label{sec:regu}
 
Let $H=(A,B)$ be a bipartite graph with parts $A$ and $B$. For $X\subseteq A$ and $Y\subseteq B$, the density of the pair $(X,Y)$ is defined as  $d(X,Y):=\frac{e(X,Y)}{|X||Y|}$. Given $\varepsilon>0$, the pair $(A,B)$ is said to be {\em $\varepsilon$-regular} if \[|d(X,Y)-d(A,B)|<\varepsilon\]
for all $X\subseteq A$  and $Y\subseteq B$ such that $|X|>\varepsilon|A|$ and  $|Y|> \varepsilon |B|$.
If, moreover, $d(A,B)>\eta$ for
some fixed $\eta>0$, we call the pair  {\em $(\varepsilon,\eta)$-regular}. 

 The regularity lemma of Szemer\'edi~\cite{Sze78} states that the vertex set of any large enough graph can be partitioned into a bounded number of clusters such that almost all pairs of clusters form an $\varepsilon$-regular bipartite graph. 
We will use  the well known degree form of the regularity lemma (see for instance~\cite{regu}).\\

\noindent Call a vertex partition $V(G)=V_1\cup\dots\cup V_\ell$ an {\em $(\varepsilon,\eta)$-regular partition} if 
\begin{enumerate}
	\item $|V_1|=|V_2|=\dots=|V_\ell|$;
	\item  $V_i$ is independent for all $i\in[\ell]$; and
	\item for all $1\le i<j\le \ell$, the pair $(V_i,V_j)$ is $\varepsilon$-regular with density either $d(V_i,V_j)>\eta$ or $d(V_i,V_j)=0$.
\end{enumerate}

\begin{lemma}[Regularity lemma - Degree form]\label{reg:deg}
For all $\varepsilon>0$ and $m_0\in\NN$ there are $N_0, M_0$ such that the following holds for all $\eta\in[0,1]$ and $n\ge N_0$. Any $n$-vertex graph $G$ has a subgraph $G'$, with $|G|- |G'|\le \varepsilon n$ and $\deg_{G'}(x)\ge \deg_G(x)-(\eta+\varepsilon)n$ for all $x\in V(G')$, such that $G'$ admits an $(\varepsilon,\eta)$-regular partition $V(G')=V_1\cup\dots\cup V_\ell$, with $m_0\le \ell\le M_0$.
\end{lemma}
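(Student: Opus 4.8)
The plan is to deduce the degree form from the original (partition) form of the regularity lemma of Szemer\'edi~\cite{Sze78} by a cleaning procedure, the one delicate point being that the degree bound is demanded for \emph{every} vertex of $G'$, not just on average. Concretely, given $\varepsilon>0$ and $m_0$, I would first fix an auxiliary constant $\varepsilon_0=\varepsilon_0(\varepsilon)\in(0,1)$ small enough that $\varepsilon_0+2\sqrt{\varepsilon_0}\le\varepsilon/5$, set $t_0:=\max\{2m_0,\lceil 1/\varepsilon_0\rceil\}$, and let $N_0,M_0$ be the outputs of the partition form of the regularity lemma for the parameters $\varepsilon_0$ and $t_0$; crucially these depend only on $\varepsilon$ and $m_0$, not on $\eta$. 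For $n\ge N_0$ this produces an equipartition $V(G)=V_0\cup V_1\cup\dots\cup V_\ell$ with $t_0\le\ell\le M_0$, $|V_0|\le\varepsilon_0 n$, each cluster of size $m\le n/\ell\le\varepsilon_0 n$, and all but at most $\varepsilon_0\binom{\ell}{2}$ of the pairs $(V_i,V_j)$ being $\varepsilon_0$-regular.

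Next I would enlarge $V_0$ by absorbing into it: (a) every vertex of a cluster $V_i$ that is $\varepsilon_0$-irregular with more than $\sqrt{\varepsilon_0}\,\ell$ of the other clusters; and (b) every vertex $x\in V_i$ that is \emph{bad} for more than $\sqrt{\varepsilon_0}\,\ell$ of the $\varepsilon_0$-regular pairs $(V_i,V_j)$ of density at most $\eta$, where $x$ is bad for such a pair if $|N_G(x)\cap V_j|>(\eta+\varepsilon_0)|V_j|$. Double-counting the irregular pairs bounds the vertices absorbed in (a) by $\sqrt{\varepsilon_0}\,n$; and since $\varepsilon_0$-regularity of a pair of density at most $\eta$ forces at most $\varepsilon_0 m$ vertices of $V_i$ to be bad for it (otherwise the bad part of $V_i$ would exhibit a density deviation exceeding $\varepsilon_0$), a second double-counting bounds the vertices absorbed in (b) by $\sqrt{\varepsilon_0}\,n$ as well. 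Hence the enlarged $V_0$ still satisfies $|V_0|\le(\varepsilon_0+2\sqrt{\varepsilon_0})n\le\varepsilon n$, at least $(1-\sqrt{\varepsilon_0})\ell\ge m_0$ clusters survive, each of size $m\le\varepsilon_0 n$, and their number is still at most $M_0$.

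Finally I would let $G'$ be the graph on $V(G)\setminus V_0$ obtained by keeping only the edges of those pairs $(V_i,V_j)$ that are $\varepsilon_0$-regular with density exceeding $\eta$, and relabel the surviving clusters as $V_1,\dots,V_\ell$. Since $G'$ agrees with $G$ on each retained pair and $\varepsilon_0<\varepsilon$, the clusters are independent and of equal size, and every pair has density $0$ or exceeding $\eta$, so $V(G')=V_1\cup\dots\cup V_\ell$ is an $(\varepsilon,\eta)$-regular partition, and $|G|-|G'|=|V_0|\le\varepsilon n$ with $m_0\le\ell\le M_0$. For the degree bound, for $x\in V_i\subseteq V(G')$ the $G$-edges at $x$ absent from $G'$ split into those reaching $V_0$ (at most $|V_0|\le\varepsilon n/5$), those inside $V_i$ (at most $m\le\varepsilon n/5$), those in $\varepsilon_0$-irregular pairs at $V_i$ (at most $\sqrt{\varepsilon_0}\,\ell\,m\le\varepsilon n/5$, since $V_i$ survived (a)), those in density-$\le\eta$ pairs for which $x$ is bad (at most $\sqrt{\varepsilon_0}\,\ell\,m\le\varepsilon n/5$, since $x$ survived (b)), and those in density-$\le\eta$ pairs for which $x$ is not bad (at most $\sum_j(\eta+\varepsilon_0)|V_j|\le(\eta+\varepsilon/5)n$); summing gives $\deg_{G'}(x)\ge\deg_G(x)-(\eta+\varepsilon)n$.

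I expect the main — indeed essentially the only — obstacle to be this last per-vertex accounting. Deleting all ``bad'' edges in one stroke controls only the total number of deleted edges, hence just the \emph{average} degree loss; it is the two-layer exceptional set in step (b) above — discarding not only the irregular clusters but also the individual vertices with atypically large neighbourhoods into low-density clusters — that upgrades the average estimate to the uniform bound required by the statement, while keeping the exceptional set of size at most $\varepsilon n$ and the auxiliary parameters independent of $\eta$.
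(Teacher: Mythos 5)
The paper does not prove Lemma~\ref{reg:deg}; it cites the degree form of the regularity lemma as known (``see for instance~\cite{regu}''). So there is no in-paper argument to compare yours against, and your attempt to supply the derivation is reasonable. You have correctly identified the one genuinely delicate point, namely upgrading the on-average degree loss to a uniform per-vertex bound, and your two-layer exceptional set (first discarding clusters incident to too many irregular pairs, then discarding individual vertices with too many ``bad'' low-density neighbourhoods), together with the two double-counting estimates, is the standard and correct way to achieve this. The final accounting also sums correctly to $(\eta+\varepsilon)n$.

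There is, however, a gap you overlooked. Step~(b) removes \emph{individual} vertices from the clusters, so afterwards the surviving clusters no longer have a common size: a cluster may lose anywhere between $0$ and roughly $\sqrt{\varepsilon_0}\,m$ vertices. You nevertheless assert that the survivors are ``each of size $m$'' and conclude that $V_1\cup\dots\cup V_\ell$ is an $(\varepsilon,\eta)$-regular partition, but this violates condition~1 of the paper's definition, which demands $|V_1|=\dots=|V_\ell|$. To repair this you must trim every surviving cluster down to a common size, say $\lfloor(1-\sqrt{\varepsilon_0})m\rfloor$, moving at most a further $\sqrt{\varepsilon_0}\,n$ vertices into $V_0$, and then invoke the slicing lemma to certify that the trimmed pairs remain $\varepsilon$-regular (with $\varepsilon_0$ chosen small enough to absorb the constant-factor loss). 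Slicing can also lower pair densities by up to about $\varepsilon_0$, so you should in fact retain only pairs of $G$-density above $\eta+\varepsilon_0$ rather than above $\eta$, and declare $x$ bad for a pair when it has more than $(\eta+2\varepsilon_0)|V_j|$ neighbours there; otherwise the retained pairs need not have density $>\eta$ in the final graph $G'$. These repairs only push constants around and do not change the structure of your argument, but as written the partition you produce does not satisfy the definition that the lemma asks for.
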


The {\em $(\varepsilon,\eta)$-reduced graph} $R$ of $G$, with respect to the $(\varepsilon,\eta)$-regular partition given by Lemma~\ref{reg:deg}, is the graph with vertex set $\{V_i:i\in[\ell]\}$, where $V_iV_j$ is an edge if  $d(V_i,V_j)>\eta$. We will often refer to the $(\varepsilon,\eta)$-reduced graph $R$ without explicitly referring to the associated $(\varepsilon,\eta)$-regular partition.
 A well-known fact is that $R$ inherits many  properties of $G$. For instance,  it asymptotically preserves the minimum degree of $G$ (scaled to the order of $R$). Indeed, for every $V_i$, we have
 \begin{equation}\label{R:deg}\deg_R(V_i)\ge \sum_{V_j\in N_R(V_i)}d(V_i,V_j)=\sum_{v\in V_i}\frac{\deg_{G'}(v)}{|V_i|}\cdot \frac{|R|}{|G'|},\end{equation}
 and so, in particular, one can deduce that 
 \begin{equation}\label{R:mindeg}\delta(R)\ge \delta(G')\cdot\frac{|R|}{|G'|}\ge \Big(\delta(G)-(\eta+\eps)n\Big)\cdot \frac{|R|}{|G'|}.\end{equation}

\section{Maximum degree $\frac{4k}3$}
	\label{4/3}
In this section we will  prove our tree embedding result for host graphs of maximum degree approximately  $\frac{4k}3$ and minimum degree roughly $\frac k2$, namely Theorem~\ref{main:2-2}. The proof of Theorem~\ref{main:2-2} crucially relies on an embedding result from~\cite{BPS1},  Lemma~\ref{lem:superlemma} below.  This lemma describes a series of configurations which, if they appear in a graph~$G$, allow us to embed any bounded degree tree of the right size into~$G$.

Before stating Lemma~\ref{lem:superlemma}, and defining the class of $(\eps,x)$-$\ext$ graphs (the graphs that are excluded as host graphs in  Theorem~\ref{main:2-2}), let us go through some useful notation. 

\smallskip

For a fixed $\theta\in(0,1)$, we say that a vertex $x$ of a graph~$H$ {\em $\theta$-sees} a set $U\subseteq V(H)$ if it has at least $\theta |U|$ neighbours in~$U$. If $C$ is a component of a reduced graph of $H-x$, then we say that $x$ {\em $\theta$-sees} $C$ if it has at least $\theta|\bigcup V(C)|$ neighbours in $\bigcup V(C)$.

A non-bipartite  graph $H$ is said to be {\em $(k,\theta)$-small} if $|V(H)|<(1+\theta)k$. A bipartite graph $H=(A,B)$ is said to be {\em $(k,\theta)$-small} if $$\max\{|A|,|B|\}<(1+\theta)k.$$ 
If a graph is not $(k,\theta)$-small, we will say that it is {\em $(k,\theta)$-large}.

\medskip

\noindent We are now ready to define the  excluded host graphs from Theorem~\ref{main:2-2}.

\begin{definition}[$(\eps,x)$-$\ext$]\label{defii}
Let $\eps >0$ and let $k\in\mathbb{N}$. Given a graph~$G$ and a vertex $x\in V(G)$, we say that $G$ is $(\eps,x)$-$\ext$ if for every  $(\eps,5\sqrt{\eps})$-reduced graph $R$ of $G-x$ the following  conditions hold: 
	\begin{enumerate}[(i)]
		\item every  component of  $R \text{ is } \textstyle(k\cdot\frac{|R|}{|G|},\sqrt[4]\eps)$-small;
		\item $x$ $\sqrt{\eps}$-sees two  components $C_1$ and $C_2$ of $R$ and $x$  does not see any other component of~$R$; 
			\end{enumerate}
		and furthermore, assuming that $\deg(x,\bigcup V(C_1))\ge \deg(x,\bigcup V(C_2))$, 
		\begin{enumerate}[(i)]\setcounter{enumi}{2}	
		\item   $C_1$ is bipartite and $(\tfrac{2k}{3}\cdot\frac{|R|}{|G|},\sqrt[4]\eps)$-large, with $x$ only seeing the larger side of $C_1$;
		\item  if $C_2$ is non-bipartite, then $C_2$ is $(\frac{2k}{3}\cdot\frac{|R|}{|G|},\sqrt[4]\eps)$-small, and if $C_2$ is bipartite, then $x$ sees only one side of the bipartition.  
	\end{enumerate}
	\end{definition}
	
	\begin{figure}[h!]
	\centering
	\includegraphics[width=.82\linewidth]{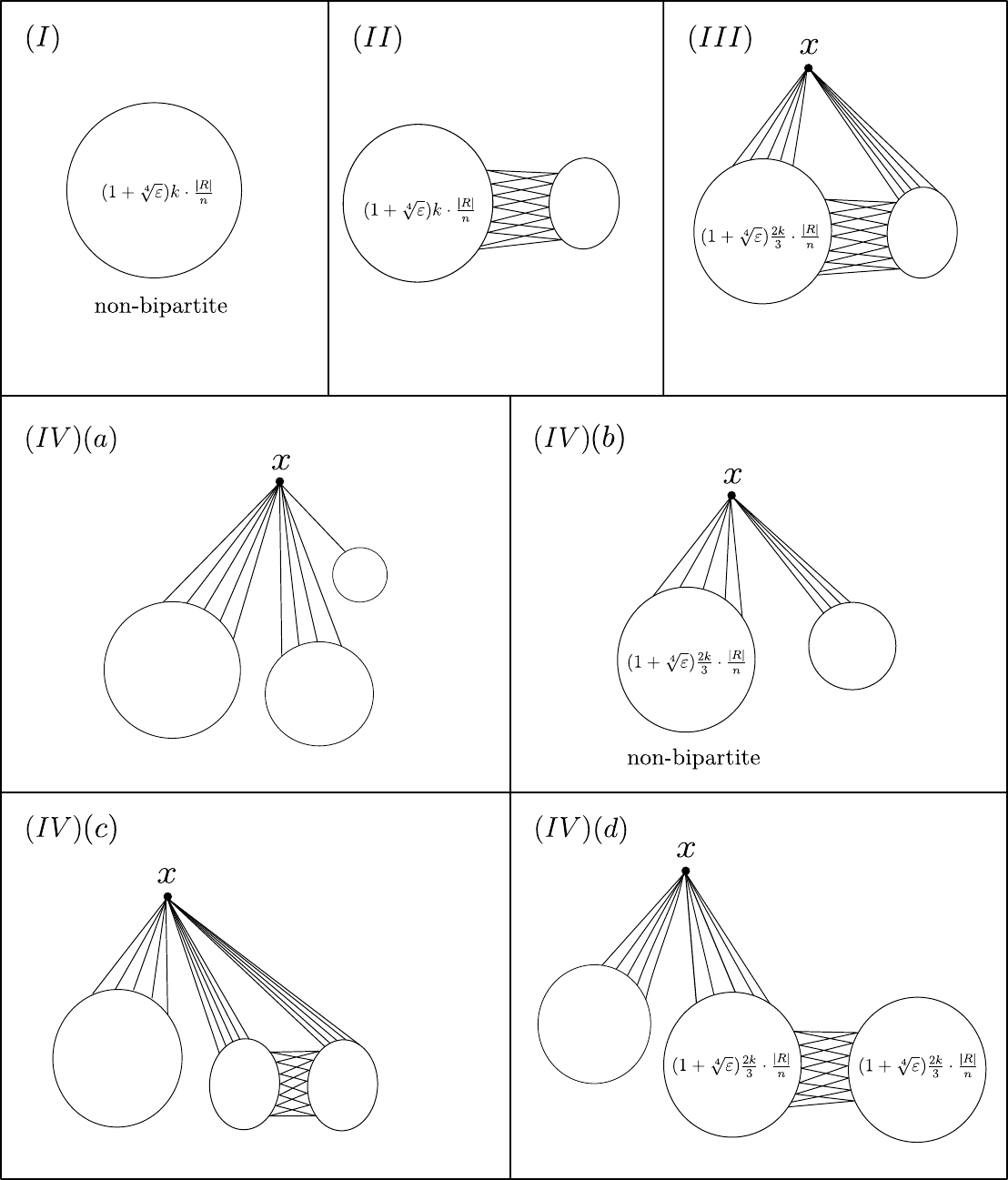}
	\caption{Cases from Lemma~\ref{lem:superlemma}.\label{alpha-cases}}
\end{figure}

\noindent We now state Lemma~\ref{lem:superlemma}. See Figure~\ref{alpha-cases} for an illustration.

\begin{lemma}$\!\!${\rm\bf\cite[Lemma 7.3]{BPS1}}\label{lem:superlemma}
	For every $\eps \in(0,10^{-10})$ and  $M_0\in\mathbb{N}$ there is $n_0\in\NN$ such that for all $n,k\ge n_0$ the following holds for every $n$-vertex graph $G$ of minimum degree at least $(1+\sqrt[4]\eps)\frac k2$. \\ 
	Let $x \in V(G)$, and suppose that $G-x$ has an $(\eps,5\sqrt\eps)$-reduced graph $R$, with $|R|\le M_0$, such that at least one of the following conditions holds:
	\begin{enumerate}[(I)]
		\item\label{cond:1} $R$ has a $(k\cdot\tfrac{|R|}{n},\sqrt[4]\eps)$-large non-bipartite component; or
		\item\label{cond:2} $R$ has a $(k\cdot\tfrac{|R|}{n},\sqrt[4]\eps)$-large bipartite component; or
		\item\label{cond:3} $R$ has a $(\tfrac{2k}{3}\cdot \tfrac{|R|}{n},\sqrt[4]\eps)$-large bipartite component such that $x$ $\sqrt\eps$-sees both sides of the bipartition; or
		\item\label{cond:4} $x$ $\sqrt\eps$-sees two components $C_1$,  $C_2$ of $R$ and one of the following holds:
		\begin{enumerate}[(a)]
			\item\label{cond:4a} $x$ sends at least one edge to a third component $C_3$ of $R$; or
			\item\label{cond:4b} $C_i$ is non-bipartite and $(\tfrac{2k}{3}\cdot \tfrac{|R|}{n},\sqrt[4]\eps)$-large for some $i\in\{1,2\}$; or
			\item\label{cond:4c} $C_i=(A,B)$ is bipartite for some $i\in\{1,2\}$, and $x$ sees both $A$ and~$B$; or
			\item\label{cond:4d}  $C_i=(A,B)$ is bipartite  for some $i\in\{1,2\}$, with $\min\{|A|,|B|\}\ge(1+\sqrt[4]\eps)\tfrac{2k}{3}\cdot \tfrac{|R|}{n}$ and $x$ seeing only one side of the bipartition.
		\end{enumerate}
	\end{enumerate}	
	Then every  $k$-edge tree $T$ of maximum degree at most $k^{\frac{1}{67}}$ embeds in $G$.
\end{lemma}

Let us remark that the original result in~\cite{BPS1}  covers even more cases than we chose to reproduce here.  

We also remark that the bound $k^{\frac 1{67}}$ on the maximum degree of the trees comes from the embedding strategy used in the first two cases of Lemma~\ref{lem:superlemma} which is as follows. We divide the tree $T$ into a constant number of tiny subtrees $T_1, T_2, \dots$ and embed these following a DFS order, embedding $T_1$ anywhere. Having embedded $\bigcup_{j<i} T_j$, we look for a regular pair $(C_i,D_i)$ having enough free space to embed $T_i$. As  we are working in a large connected component of the reduced graph, there is  a path $P_i$ (in the reduced graph) from the cluster containing the already embedded ancestor of $T_i$ to $C_i$. We embed the upper levels of $T_i$ along $P_i$, and the bulk of $T_i$ into the pair $(C_i,D_i)$. Note that if  $P_i$ has length $\ell$,  we use at most $\Delta(T)^{\ell}$ vertices from clusters on $P_i$. This is negligible if $\Delta(T)^{\ell}=o(k)$, and so, assuming that the size of the reduced graph is bounded by $M_0$, we could work with trees $T$ obeying  $\Delta(T)\le k^{\frac 1{O(M_0)}}$. In order to relax this bound to $k^{\frac 1{67}}$, extra efforts have to be made   in order to use only short paths $P_i$. Any improvement in this argument would lead to an improvement in our bound on the maximum degree of the trees.   

We are now ready for the proof of Theorem~\ref{main:2-2}.

\begin{proof}[Proof of Theorem~\ref{main:2-2}]
Given $\delta\in(0,1)$, we set 
\begin{equation}\label{epsi}
\eps:=\frac{\delta^4}{10^{10}}.
\end{equation}

 Let $N_0,M_0$ be given by~Lemma~\ref{reg:deg}, with input $\varepsilon$, $\eta:=5\sqrt{\eps}$ and $m_0:=\frac{1}{\eps}$, and let $n'_0$ be given by Lemma~\ref{lem:superlemma}, with input $\varepsilon$ and $M_0$. We choose $$n_0:=(1-\eps)^{-1}\max\{n'_0,N_0\}+1$$ as the numerical output of Theorem~\ref{main:2-2}.
 
Let $G$ and $T$ be given as in Theorem~\ref{main:2-2}. Consider an arbitrary vertex $x\in V(G)$ with $\deg(x)\ge (1+\delta)\frac43k$, and
 apply Lemma~\ref{reg:deg} to $G-x$. We obtain a subgraph $G'\subseteq G-x$ which admits an $(\varepsilon,5\sqrt{\varepsilon})$-regular partition of $G-x$, with corresponding  $(\varepsilon,5\sqrt\eps)$-reduced graph $R$.  Note that  $$\delta(G')\geq (1+\tfrac{\delta}2)\frac{k}{2}\ge (1+100\sqrt[4]\eps)\frac{k}{2}.$$  
If $R$ has a $(k\cdot \tfrac{|R|}{|G'|},\sqrt[4]\eps)$-large component, we are either  in scenario~\eqref{cond:1} or~\eqref{cond:2}  from Lemma~\ref{lem:superlemma}, and we can embed $T$. So let us assume this is not the case. In particular, we can assume that condition (i) of Definition~\ref{defii} holds.

Since $G'$ misses less than $\varepsilon n+1$ vertices from $G$, we have that
\begin{equation}\label{maxim}
\deg_{G}(x,G')\ge (1+\tfrac{\delta}{2})\frac 43k\ge(1+100\sqrt[4]\eps)\frac43k.
\end{equation}
It is clear that $x$ has to $\sqrt{\eps}$-see at least one component $C_1$ of $R$. Indeed, otherwise, we would have that
 \begin{equation}\label{eq:1comp}
 	\frac 43\delta n\le \frac 43k\le\deg_{G}(x,G')=\sum_{C} \textstyle\deg_{G}(x,\bigcup V(C))\le\sqrt\eps n,
 \end{equation}
 where the sum is over all components $C$ of $R$, and this contradicts~\eqref{epsi}.
 Suppose that $x$ sees only one component. Since $C_1$ is $(k\cdot \tfrac{|R|}{|G'|},\sqrt[4]\eps)$-small and $\deg_G(x,\bigcup V(C))\ge(1+\frac{\delta}{2})\frac{4k}{3}$, it follows that $C_1$ is bipartite and thence the largest bipartition class of $C_1$ has size at least $(1+\tfrac\delta2)\frac{2k}3\cdot \frac{|R|}{|G'|}$ and $x$ $\sqrt\eps$-sees both bipartition classes. Therefore we are in scenario~\eqref{cond:3} from Lemma~\ref{lem:superlemma} and thus we can embed $T$. 
 
 Suppose from now that $x$ sends edges outside of $C_1$. Since $C_1$ is $(k\cdot \tfrac{|R|}{|G'|},\sqrt[4]\eps)$-small, it follows that 
 \begin{equation}\label{eq:2comp}\textstyle\deg_G(x,G'\setminus \bigcup V(C_1))\ge (1+50\sqrt[4]\eps)\dfrac{k}{3}.\end{equation}
We claim that $x$ $\sqrt{\eps}$-sees at least two components of $R$. Indeed, since $k\ge \delta n$ and from~\eqref{eq:2comp} we have
\[\frac{\delta n}3\le (1+50\sqrt[4]\eps)\frac{k}{3}\le \sum_{C\not=C_1}\deg_G(x,\bigcup V(C))\le \sqrt\eps n,\]
which contradicts~\eqref{epsi}. 

If $x$ sends at least one edge to a third  component, then we are in scenario~\eqref{cond:4a} from Lemma~\ref{lem:superlemma} and thus $T$ can be embedded. Therefore, we know that $x$ actually  $\sqrt\eps$-sees exactly two components, which we will call $C_1$ and $C_2$ (In particular, we know that condition (ii) of Definition~\ref{defii} holds). By symmetry, we may assume that $\deg(x,\bigcup V(C_1))\ge\deg(x,\bigcup V(C_2))$ and thus,  by~\eqref{maxim},
\begin{equation}\label{equii}
\textstyle\deg(x,\bigcup V(C_1))\ge (1+100\sqrt[4]\eps)\dfrac{2k}{3}.
\end{equation}
 Thus, if $C_1$ is non-bipartite we are in scenario~\eqref{cond:4b} from Lemma~\ref{lem:superlemma}, and therefore, we can assume $C_1=(A_1,B_1)$ is bipartite. Also,  $x$ only sees one side of the bipartition, say $A_1$, since otherwise we are in scenario~\eqref{cond:4c}. Moreover,  by~\eqref{equii}, and since we may assume we are not in scenario~\eqref{cond:4d}, we know that
\begin{equation}\label{size:C1C2}|A_1|\ge (1+100\sqrt[4]\eps)\frac{2k}{3}\cdot\frac{|R|}{|G'|}\hspace{.5cm}\text{and}\hspace{.5cm}|B_1|\le (1+\sqrt[4]\eps)\frac{2k}{3}\cdot\frac{|R|}{|G'|}.\end{equation}
So, condition (iii) of Definition~\ref{defii} holds. 

Furthermore, if $C_2$ is non-bipartite, then it  is $(\tfrac{2k}{3}\cdot\tfrac{|R|}{|G'|},\sqrt[4]\eps)$-small, as otherwise we are in case~\eqref{cond:4b}. If $C_2$ is bipartite, then $x$ can only see one side of the bipartition, since otherwise we are in scenario~\eqref{cond:4c}. Therefore, $C_2$ satisfies condition (iv) of Definition~\ref{defii}, implying that $G$ is $(\frac{\delta^4}{10^{10}},x)$-$\ext$.
\end{proof}

\section{The proof of Theorem~\ref{main}}\label{sec:mainproof}

This section contains the proof of our main result, Theorem~\ref{main}. We first need to collect some preliminary results. 

Our first lemma is folklore, it states that every tree $T$ has a cutvertex which separates $T$ into  subtrees of size at most $\frac{|T|}2$. The proof is straightforward and can be found  for instance in~\cite{2k3:2016}.

\begin{lemma}\label{lem:cut1}
Every tree $T$ with $t$ edges has a vertex $z$ such that every component of $T-z$ has at most  $\lceil\frac t2\rceil$ vertices.
\end{lemma}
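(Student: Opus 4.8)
The statement to prove is Lemma~\ref{lem:cut1}: every tree $T$ with $t$ edges has a vertex $z$ such that every component of $T-z$ has at most $\lceil\frac t2\rceil$ vertices. The plan is to find this separating vertex by a descent (or ``token sliding'') argument along the tree. First I would root $T$ at an arbitrary vertex and, for each vertex $v$, consider the \emph{largest component} of $T-v$; call its size $f(v)$. The goal is to show that $\min_v f(v)\le\lceil\frac t2\rceil$. I would argue by picking a vertex $z$ that minimizes $f$, and then derive a contradiction from the assumption $f(z)>\lceil\frac t2\rceil$, i.e.\ $f(z)\ge\lceil\frac t2\rceil+1$.

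The key step is the following local move. Suppose $f(z)>\lceil\frac t2\rceil$, and let $C$ be the (unique, since it has more than half the vertices) largest component of $T-z$, and let $w$ be the neighbour of $z$ lying in $C$. I would then compare $f(w)$ with $f(z)$. The components of $T-w$ are: the ``back side'' $D$ consisting of $z$ together with everything hanging off $z$ except through $w$ — this has $|T|-|C|=(t+1)-|C|$ vertices — and the components of $T-z-w$ that lie inside $C$, each of which is a proper subset of $C$ and hence has at most $|C|-1$ vertices. Since $|C|=f(z)\ge\lceil\frac t2\rceil+1>\frac{t+1}{2}$, we get $|D|=(t+1)-|C|<|C|$, so $|D|\le |C|-1$; and every other component of $T-w$ also has at most $|C|-1$ vertices. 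Hence $f(w)\le f(z)-1<f(z)$, contradicting the minimality of $z$. Therefore $f(z)\le\lceil\frac t2\rceil$, which is exactly the conclusion.

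Alternatively — and this is perhaps cleaner to write — I would phrase it as a direct descent without invoking a minimizer: start at any vertex $v_0$; while $f(v_i)>\lceil\frac t2\rceil$, move to the neighbour $v_{i+1}$ of $v_i$ inside the (unique) largest component of $T-v_i$. The computation above shows $f(v_{i+1})\le f(v_i)-1$, so the process strictly decreases a nonnegative integer and must terminate; moreover it cannot cycle, since $f$ strictly decreases, so it terminates at a vertex $z$ with $f(z)\le\lceil\frac t2\rceil$, as required. A small sanity check: the inequality $|D|<|C|$ needs $|C|>\frac{t+1}{2}$, which holds because $|C|\ge\lceil\frac t2\rceil+1\ge\frac t2+1>\frac{t+1}{2}$; this is the only place the bound $\lceil\frac t2\rceil$ (rather than $\lfloor\frac t2\rfloor$) matters, and it works for both parities of $t$.

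I do not anticipate a serious obstacle here — this is a folklore fact and the excerpt itself notes the proof is straightforward and available in~\cite{2k3:2016}. The only point requiring a little care is the parity bookkeeping in the inequality $(t+1)-|C|\le |C|-1$, and the observation that when $f(z)>\lceil\frac t2\rceil$ the largest component of $T-z$ is genuinely unique (it contains strictly more than half of $V(T)$), which is what makes the ``move towards the big component'' step well defined.
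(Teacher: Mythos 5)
Your argument is correct: the descent/minimizer step toward the unique largest component, together with the arithmetic $|D| = (t+1)-|C| \le |C|-1$ when $|C|\ge\lceil t/2\rceil+1$, is exactly the standard centroid argument for this folklore fact. The paper does not present its own proof and simply defers to~\cite{2k3:2016}, which uses essentially the same approach.
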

A vertex $z$ as in  Lemma~\ref{lem:cut1} is called a  {$\frac t2$-separator} for $T$.
We also need the following lemma from~\cite{BPS1}, which will allow us to conveniently  group the components of $T-z$ obtained from Lemma~\ref{lem:cut1} when is applied to a tree $T$ .
\begin{lemma}$\!\!${\rm\cite[Lemma 4.4]{BPS1}}\label{lem:num}
Let $m,t \in \NN_+$ and let $(a_i)_{i=1}^m$ be a sequence of  integers  with $0<a_i \leq \lceil\frac t2\rceil$, for each $i\in [m]$, such that $\sum_{i=1}^m a_i \leq t$. Then there is a partition $\{J_1,J_2\}$ of $[m]$ such that  $\sum_{i\in J_2} a_i \leq \sum_{i\in J_1} a_i \leq \frac{2}{3}t$.

\end{lemma}
Finally, we need another embedding result from~\cite{BPS1}. This result will enable us to embed any bounded degree forest into any large enough bipartite graph with an underlying $(\eps,\eta)$-regular partition of a certain structure. \\

\noindent Let us first define the kind of forest we are interested in.

\begin{definition}
Let $t_1, t_2\in\mathbb{N}$ and let $c\in(0,1)$. We say that a forest $F$, with bipartition classes $C_1$ and $C_2$, is a $(t_1, t_2, c)$-forest if 
\begin{enumerate}
	\item $|C_i|\le t_i$ for $i=1,2$; and
	\item  $\Delta(F)\leq (t_1+t_2)^{c}$.\end{enumerate}
	\end{definition}
\noindent We are now ready for the embedding result.
	
\begin{lemma}$\!\!${\rm\cite[Corollary 5.4]{BPS1}}\label{emb:forest}
	For all $\varepsilon\in (0,10^{-8})$ and  $d,M_0\in\NN$  there is $t_0$ such that for all $n, t_1, t_2\geq t_0$ 
	the following holds. 
	Let $G$ be a $n$-vertex graph  having an $(\varepsilon,5\sqrt\varepsilon)$-reduced graph $R$, with $|R|\le M_0$, such that
	\begin{enumerate}[(i)]
		\item $R=(X,Y)$ is connected and bipartite;
		\item $\diam(R)\leq d$;
		\item $\deg(x)\ge (1+100\sqrt\varepsilon)t_2 \cdot \frac{|R|}n$, for all $x\in X$; and
		\item $|X|\ge (1+100\sqrt\varepsilon)t_1 \cdot \frac{|R|}n$.
	\end{enumerate}
Then any $(t_1,t_2,\frac 1d)$-forest $F$, with colour classes $C_0$ and $C_1$, can be embedded into $G$, with $C_0$ going to $\bigcup X$ and $C_1$ going to $\bigcup Y$. \\
	Moreover, if $F$ has at most $\tfrac{\eps n}{|R|}$ roots, then the roots going to $\bigcup X$ can be mapped to any prescribed set of size at least $2\varepsilon|\bigcup X|$ in $X$, and the roots going to $\bigcup Y$ can be mapped to any prescribed set of size at least $2\varepsilon|\bigcup Y|$ in~$Y$.
\end{lemma}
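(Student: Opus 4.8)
The plan is to embed $F$ greedily, tree by tree, processing each tree of $F$ in breadth-first order from a chosen root, and using $\eps$-regularity of the pairs corresponding to edges of $R$ to keep the procedure alive at every step. Write $\ell=|R|\le M_0$ and let $m$ be the common size of the clusters of the $(\eps,5\sqrt\eps)$-regular partition (of a spanning subgraph of $G$) underlying $R$, so $\ell m\ge(1-\eps)n$ and, as $t_0$ may be chosen large, $m$ is large. The only regularity input needed is the routine \emph{typical-degree} estimate: whenever $V_iV_j\in E(R)$ and $W\subseteq V_j$ with $|W|\ge\eps m$, all but at most $\eps m$ vertices of $V_i$ have at least $(5\sqrt\eps-\eps)|W|\ge 4\sqrt\eps|W|$ neighbours in $W$; call the remaining vertices \emph{bad for} $(V_i,W)$. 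Since $R$ is connected and bipartite, every tree $T$ of $F$ admits a homomorphism into $R$ respecting the bipartition (sending $C_0$-vertices to $X$ and $C_1$-vertices to $Y$), and there is a lot of freedom in choosing such a homomorphism, which I will exploit for load balancing.

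\emph{The procedure.} I build $\phi$ one tree at a time, each tree in BFS order from its root (a prescribed root of $F$ if the tree contains one, otherwise an arbitrary vertex). Alongside $\phi$ I maintain, for each not-yet-embedded vertex $u$ whose parent $p$ is already embedded, a target cluster $c(u)\in V(R)$ on the correct side that is $R$-adjacent to the cluster of $\phi(p)$, together with a \emph{candidate set} $A(u)\subseteq c(u)\cap N(\phi(p))$ of currently unused vertices, keeping $|A(u)|$ above a fixed positive fraction of $m$. To embed $u$: first decide, for each child $w$ of $u$, a target cluster $c(w)$ among the $R$-neighbours of $c(u)$ on the opposite side; then choose $\phi(u)\in A(u)$ avoiding the images used so far and the vertices bad for any pair $\big(c(u),c(w)\cap(\text{unused})\big)$; finally set $A(w):=N(\phi(u))\cap c(w)\setminus(\text{used})$ and earmark inside it a private sub-block for each sibling routed to the same cluster as $w$. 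Each root is placed first, into its prescribed target set intersected with the unused and non-bad vertices if it is prescribed, and into any still-underfull cluster on the correct side otherwise.

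\emph{Keeping the procedure alive.} Two failure modes must be excluded: a candidate set could shrink too much, or a cluster could be asked to host more than $m$ vertices. Since $\phi$ places at most $|C_0|\le t_1$ vertices into $\bigcup X$ and at most $|C_1|\le t_2$ into $\bigcup Y$, (iv) yields a surplus $|\bigcup X|-t_1\ge 100\sqrt\eps\,t_1$, and (iii) gives, for each $X$-cluster $V_i$, a neighbourhood $\bigcup_{V_j\in N_R(V_i)}V_j$ of size at least $(1+100\sqrt\eps)t_2\ge(1+100\sqrt\eps)|C_1|$; the symmetric statements on the $Y$-side follow from connectivity of $R$. The bound $\diam(R)\le d$ means that when routing the children of a vertex I can always steer them towards neighbouring clusters that are still comfortably underfull (there being $\deg_R$ of them to pick from), and $\Delta(F)\le(t_1+t_2)^{1/d}$ limits how many target-cluster slots are demanded at any one vertex in a way compatible with this freedom, so a global averaging argument spreads the surplus so that no cluster overflows. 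The candidate sets are then controlled by the typical-degree estimate applied at the moment $\phi(p)$ is fixed, which bounds $|A(u)|$ below by $4\sqrt\eps$ times the size of the unused part of $c(u)$; since that part stays a constant fraction of $m$ by the same balancing, $|A(u)|$ stays large. The ``moreover'' clause then follows, since $F$ has at most $\eps n/|R|=\eps m$ roots while each prescribed target set has size at least $2\eps|\bigcup X|$ (resp.\ $2\eps|\bigcup Y|$), leaving ample room to insert the roots one by one while avoiding used and bad vertices.

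\emph{The main obstacle.} The genuinely delicate part is this global load-balancing: one must verify that the $100\sqrt\eps$ surplus of (iii)--(iv), the freedom to route children through any of the $\deg_R$ neighbouring clusters granted by $\diam(R)\le d$, and the control on the branching of $F$ granted by $\Delta(F)\le(t_1+t_2)^{1/d}$ (together with $|R|\le M_0$ and $n\ge t_0$) combine so that, simultaneously and throughout the entire run of the greedy embedding, no cluster is over-demanded and no candidate set drops below its threshold. This is not a local check --- it is precisely where all four hypotheses interlock --- and making it precise, in particular choosing the homomorphisms of the individual trees into $R$ coherently rather than one at a time, is the heart of the argument.
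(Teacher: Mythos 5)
The paper does not prove this lemma: it is imported verbatim from \cite{BPS1} (Corollary 5.4 there) and used as a black box, so there is no in-paper argument to compare yours against. I therefore assess your proposal on its own.

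Your proposal gets the overall template right (greedy BFS embedding of each tree of $F$, candidate sets maintained via the typical-degree property of $\eps$-regular pairs, a homomorphism of $F$ into $R$ chosen so that the images respect the bipartition). But as you yourself concede in your final paragraph, the step that actually carries the lemma --- choosing the homomorphism of $F$ into $R$ coherently so that, throughout, no cluster is over-demanded and every candidate set keeps size $\gg\eps m$ --- is only named, not proved. The assertion that ``a global averaging argument spreads the surplus so that no cluster overflows'' is precisely what needs demonstrating, and it is where $\diam(R)\le d$, $\Delta(F)\le (t_1+t_2)^{1/d}$, $|R|\le M_0$ and the $100\sqrt\eps$-surpluses all have to interlock; you do not explain the interaction, nor why $\Delta(F)^d$ being comparable to $t_1+t_2$ is the right scaling. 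As it stands this is a gap, not a step, and the rest of the sketch is routine bookkeeping around it.

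There is also a concrete error: you claim ``the symmetric statements on the $Y$-side follow from connectivity of $R$.'' They do not. Connectivity gives no quantitative surplus on $Y$. The hypotheses are deliberately asymmetric: (iii) is a \emph{local} bound (every $X$-cluster has $Y$-neighbourhood of total weight $\ge(1+100\sqrt\eps)t_2$), while (iv) is a \emph{global} bound (the whole side $\bigcup X$ has size $\ge(1+100\sqrt\eps)t_1$), and there is no analogous degree or size condition stated for $Y$-clusters. A correct proof must route the $C_1$-vertices locally into the $Y$-neighbourhoods of their already-embedded $X$-parents, using (iii); it cannot assume a global $Y$-surplus that the hypotheses do not provide. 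Treating the two sides symmetrically would make the statement false for graphs where some $Y$-cluster has tiny $R$-degree.
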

\noindent We are now ready for the proof of our main theorem,  Theorem~\ref{main}.

\begin{proof}[Proof of Theorem~\ref{main}]
Given $\delta\in (0,1)$, we set $$\eps:=\frac{\delta^4}{10^{10}}$$ and 
apply Lemma~\ref{reg:deg}, with inputs $\eps$, $\eta=5\sqrt\eps$ and  $m_0:=\frac 1\eps$, to obtain numbers $n_0$ and~$M_0$. Next, apply Lemma~\ref{emb:forest},  with input $\eps$ and  further inputs $d:=3$ and~$M_0$ to obtain a number $k'_0$.
Choose $k_0$ as the larger of $n_0$, $k'_0$ and the output of Theorem~\ref{main:2-2}.

Now, let $k,n\in\mathbb N$, let $\alpha\in [0,\frac 13)$, let $T$ be a tree and let $G$ be a graph as in Theorem~\ref{main}. Let $x$ be an arbitrary vertex of maximum degree in $G$. Note that 
\begin{equation*}
\deg_G (x)\geq 2(1+{\delta})(1-\alpha)k\geq (1+{\delta})\tfrac{4k}{3}.
\end{equation*}
We apply the regularity lemma (Lemma~\ref{reg:deg}) to $G-x$ to obtain a subgraph $G'\subseteq G-x$ which admits an $(\eps, 5\sqrt\eps)$-regular partition with a corresponding reduced graph $R$. Moreover, since $G'$ misses only few vertices from $G$, we know that
\begin{equation}\label{deg_x}
\deg_G(x,G')\ge 2(1+\tfrac\delta2)(1-\alpha)k\hspace{.5cm}
\end{equation}
 and
 \begin{equation}\label{deg_R}
 \delta(G')\ge (1+\tfrac{\delta}{2})(1+\alpha)\frac{k}{2},
\end{equation}
and thus
 \begin{equation}\label{mindegR}
\delta (R)\geq (1+\tfrac\delta 2)(1+\alpha)\frac{k}{2}\cdot\frac{|R|}{|G'|}.
\end{equation}

Apply Theorem~\ref{main:2-2} to $T$ and  $G$. This either yields an embedding of~$T$, which would be as desired, or tells us that $G$ is an $(\eps,x)$-$\ext$ graph. We assume the latter from now on. \\

\noindent So, we know that $x$ $\sqrt\eps$-sees two components~$C_1$ and~$C_2$ of $R$, where $C_1=(A, B)$ is bipartite, say with $|A|\ge |B|$. Moreover,  $x$ does not see any other component of~$R$. Furthermore, 
\begin{enumerate}[(A)]
\item $C_i$ is $(k\cdot\frac{|R|}{|G'|},\sqrt[4]\eps)$-small, for $i=1,2$; and\label{AAA}
		\item $C_1$ is $(\tfrac{2k}{3}\cdot\frac{|R|}{|G'|},\sqrt[4]\eps)$-large, and $x$ does not see $B$.\label{CCC111}
\end{enumerate}
By~\eqref{deg_x}, and since we assume that $x$ sends more edges to $\bigcup V(C_1)$ than to $\bigcup V(C_2)$, we know that 
\begin{equation}\label{deg_x_C1}
\textstyle\deg_G(x,\bigcup V(C_1)	)\ge (1+\tfrac{\delta}2)(1-\alpha)k,
\end{equation}
and thus, by~\eqref{CCC111},
\begin{equation}\label{size_of_A}
|C_1|\ge |A|\ge (1+\tfrac{\delta}2)(1-\alpha)k\cdot\frac{|R|}{|G'|},
\end{equation}
since $x$ has at least that many neighbours in $A$, because of inequality~\eqref{deg_x_C1}.

Also,
note that  because of~\eqref{AAA} and because of the bound~\eqref{mindegR}, we know that any pair of clusters from the same bipartition class of $C_1$ has a common neighbour. Therefore, 
\begin{equation}\label{diameter}
\text{the diameter of $C_1$ is bounded by $3$.}
\end{equation}
Let us now turn to the tree $T$. We apply Lemma~\ref{lem:cut1} to find  a $\frac t2$-separator~$z$ of $T$. Let $\mathcal F$ denote the set of all components of $T-z$. Then 
\begin{equation}\label{compoF}
\text{each component of $\mathcal F$ has size at most $\Big\lceil\frac t2\Big\rceil$.}
\end{equation}

 \begin{figure}[h!]
 	\centering
 	\includegraphics[width=.9\linewidth]{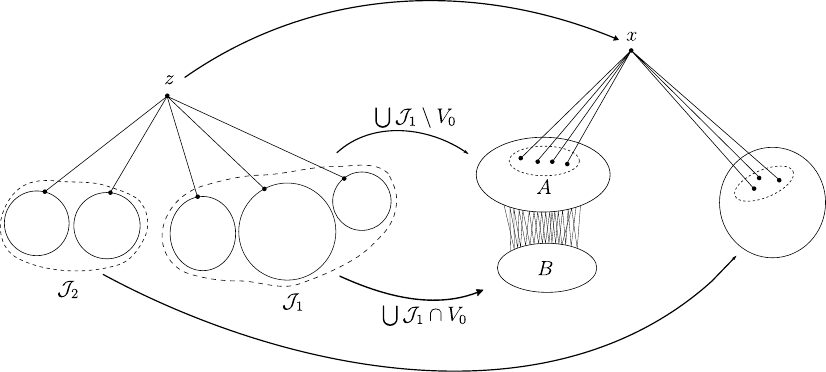}
 	\caption{Embedding if~\eqref{size_of_V0} does not to hold.}
 	\label{fig:embedding1}
 \end{figure}
 
Let $V_0$ denote the set of all vertices of $T-z$ that lie at even distance to $z$.
 We claim that if we cannot embed $T$, then
\begin{equation}\label{size_of_V0}
|V_0|\ge (1+\alpha)\frac k2.
\end{equation}
Indeed, suppose otherwise. Then we can apply Lemma~\ref{lem:num} to obtain a partition of $\mathcal F$ into two sets $\mathcal J_1$ and $\mathcal J_2$ such that
\[ \textstyle|\bigcup \mathcal J_1|\le \dfrac 23k\text{ \ \ and \ \ }|\bigcup \mathcal J_2|\le \dfrac k2.\]
We embed $z$ into $x$. Our plan is to 
 use Lemma~\ref{emb:forest} with reduced host graph~$C_1$, and with \[t_1+t_2:=\textstyle|\bigcup\mathcal J_1|\le\dfrac 23k\]
where $t_1:=|\bigcup\mathcal J_1\setminus V_0| $ and $t_2:=|\bigcup\mathcal J_1\cap V_0|$ are the sizes of the two bipartition classes of $\bigcup\mathcal  J_1$. Since we assumed~\eqref{size_of_V0} does not to hold, we have
\begin{equation}\label{upper:V0}t_2\le|V_0|\le(1+\alpha)\frac k2.\end{equation}
We now embed $\bigcup \mathcal J_1$ into~$C_1$, with the roots of $\mathcal J_1$ embedded in the neighbourhood of $x$. Observe that  condition (iii) of Lemma~\ref{emb:forest} holds because of~\eqref{mindegR} and~\eqref{upper:V0}, and condition (iv) holds because of~\eqref{size_of_A}. Moreover, the neighbourhood of $x$ is large enough to accommodate the roots of the trees from $\mathcal J_1$ because of~\eqref{deg_x_C1} and the bound on $\Delta(T)$. In order to see condition~(ii) of  Lemma~\ref{emb:forest}, it suffices to recall~\eqref{diameter}.

 Also, because of~\eqref{deg_R}, and since  $x$ also $\sqrt{\eps}$-sees the  component  $C_2$, we can embed the trees from~$\mathcal J_2$  into $C_2$. We do this by first mapping the roots of the trees from~$\mathcal J_2$ into the neighbourhood of $x$ in $C_2$. Then, since the minimum degree of $G'$ is larger than $|\bigcup\mathcal J_2|$ we may complete the embedding of $\bigcup\mathcal J_2$ greedily. In this way, we have embedded all of~$T$, as desired.

 So, from now we can and will assume that~\eqref{size_of_V0} holds. We split the remainder of the proof into two complementary cases, which will be solved in  different ways. Our two cases are defined according to whether or not there is a tree $F^*\in\mathcal F$ such that $|V(F^*)\cap V_0|>\alpha k$. Let us first treat the case where such a tree $F^*$ does not exist.

\begin{figure}[h!]
	\centering
	\includegraphics[width=.9\linewidth]{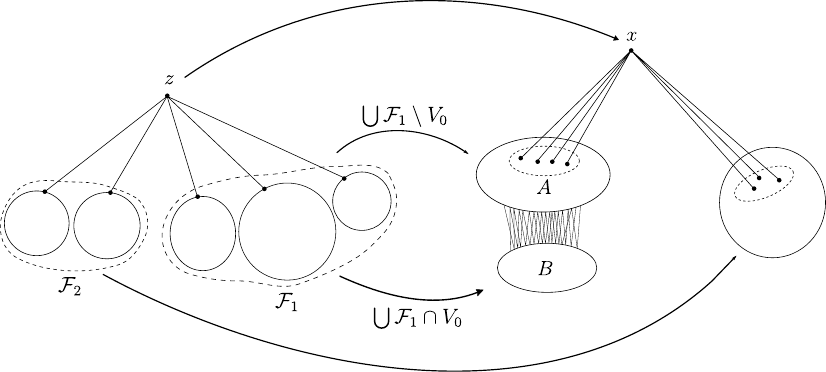}
	\caption{Embedding in Case 1.}
	\label{fig:embedding3}
\end{figure}

\bigskip

\noindent{\bf Case 1:} $|V(F)\cap V_0|\le\alpha k$ for each $F\in \mathcal F$.

\medskip

In this case, we proceed as follows. First, we embed $z$ into $x$.
We take an inclusion-maximal subset $\mathcal F_1$ of $\mathcal F$ such that
\begin{equation}\label{F1upper}
\textstyle|\bigcup \mathcal F_1\cap V_0|\le (1+\alpha)\dfrac k2
\end{equation}
holds. Then, because of the maximality of $\mathcal F_1$ and our assumption on $|V(F)\cap V_0|$ for the trees $F\in\mathcal F$, we know that 
\begin{equation}\label{F1lower}
\textstyle|\bigcup \mathcal F_1\cap V_0|\ge (1-\alpha)\dfrac k2.
\end{equation}
Hence, the trees from $\mathcal F_1$ can be embedded into $C_1$, by using Lemma~\ref{emb:forest} as before, with $t_1+t_2:=|\bigcup\mathcal F_1|$ where $t_1:=|\bigcup \mathcal F_1\setminus V_0|$ and $t_2:=|\bigcup \mathcal F_1\cap V_0|$. Indeed, inequalities~\eqref{F1upper} and~\eqref{mindegR} ensure that condition (iii) of the lemma holds.
Furthermore, because of~\eqref{size_of_A} and~\eqref{F1lower}, we know that \[t_1=\textstyle|\bigcup \mathcal F_1\setminus V_0|\le (1+\alpha)\dfrac k2\le\dfrac{1}{1+\tfrac{\delta}{2}} |\bigcup V(A)|,\]
and hence, it is clear that also condition (iv) of Lemma~\ref{emb:forest} holds. 

Condition~(ii) of  Lemma~\ref{emb:forest} holds because of~\eqref{diameter}.
Finally, inequality~\eqref{deg_x_C1} ensures we can  embed $\mathcal F_1$ in $C_1$ in such a way the roots of $\mathcal F_1$ are embedded into the neighbourhood of $x$ in $C_1$.

Now, the trees from $\mathcal F_2:=\mathcal F\setminus \mathcal F_1$ can be embedded into $C_2$. First, embed the neighbours of $z$ into the neighbourhood of $x$ in $C_2$. Then, observe that~\eqref{F1lower} implies that 
$$\textstyle|\bigcup \mathcal F_2|\le (1+\alpha)\dfrac k2\le \delta(G').$$
Therefore, we can  embed the remainder of the trees from $\mathcal F_2$ into $C_2$ in a greedy fashion.

\begin{figure}[h!]
	\centering
	\includegraphics[width=.9\linewidth]{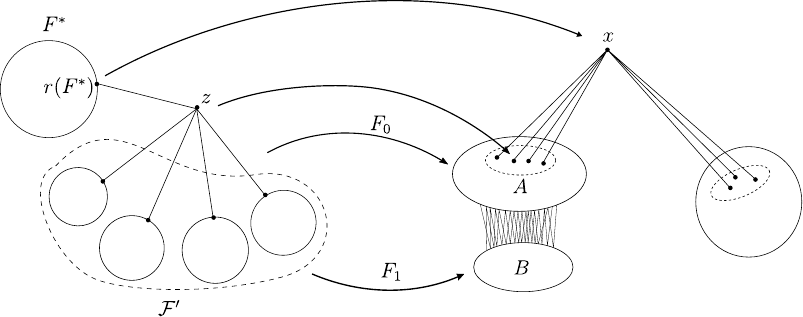}
	\caption{Embedding in Case 2.}
	\label{fig:embedding2}
\end{figure}

\bigskip

\noindent{\bf Case 2:} There is a tree $F^*\in\mathcal F$ such that $|V(F^*)\cap V_0|>\alpha k$.\\

\noindent In this case, let us set $\mathcal F':=\mathcal F\setminus\{F^*\}$ and note that
\begin{equation}\label{F'}
\textstyle|\bigcup \mathcal F'\cap V_0|\le (1-\alpha)k.
\end{equation}
Our plan is to embed $z$ into a neighbour of $x$ in $A$, and embed all trees from~$\mathcal F'$ into $C_1$. We then complete the embedding by mapping the root of~$F^*$ to $x$, and the rest of $F^*$  to $C_2$.

For the embedding of $\{z\}\cup \bigcup\mathcal F'$, we will  use Lemma~\ref{emb:forest} as before, but this time the roles of $A$ and $B$ will be reversed. That is, all of $$F_0:=\textstyle(\{z\}\cup \bigcup\mathcal F')\cap V_0$$ is destined to go to $A$, while all of $$\textstyle F_1:=(\{z\}\cup \bigcup\mathcal F')\setminus V_0$$ is destined to go to $B$. \\

We choose  $t_1+t_2:=|\bigcup\mathcal F'|+1$ where $t_1$ and $t_2$ are the sizes of the bipartition classes of $\{z\}\cup \bigcup\mathcal F'$, that is, we set $t_1:=|F_0|$ and $t_2:=|F_1|$.
Because of~\eqref{size_of_V0}, there are at most $(1-\alpha)\frac k2$ vertices in $T-z$ lying at odd distance from~$z$. In particular, $t_2\le (1-\alpha)\frac k2$. So,  by~\eqref{mindegR}, we know that  condition (iii) of Lemma~\ref{emb:forest} holds (and condition~(i) is obviously true).

Now, condition (iv) of Lemma~\ref{emb:forest} is ensured by inequality~\eqref{F'} together with~\eqref{size_of_A}. Observe that condition~(ii) of  Lemma~\ref{emb:forest} holds because of~\eqref{diameter}.
Therefore, we can embed all of $\{z\}\cup \bigcup\mathcal F'$ with the help of Lemma~\ref{emb:forest}. Furthermore, we can make sure that $z$ is embedded into a neighbour of $x$.

It remains to embed the tree $F^*$. We embed its root $r(F^*)$ into $x$, and embed all the neighbours of $r(F^*)$ into arbitrary neighbours of $x$ in $C_2$. We then embed the rest of $F^*$ greedily inyo $C_2$. Note that this is possible, since by~\eqref{compoF}, we know that
\[|F^*-r(F^*)|\le\Big\lceil\frac k2\Big\rceil -1,\]
and so, our bound~\eqref{deg_R} guarantees that the minimum degree in $C_2$ is large enough to  embed the remainder of $F^*$ greedily into $C_2$.
\end{proof}

\section{Concluding remarks}\label{sec:conclusion}

\subsection{Lower bounds on the minimum degree}\label{lower_min}

Let us now discuss why  variants of Conjecture~\ref{conj:ell} (or of Theorem~\ref{main}) with  bounds  on the minimum degree that are below the threshold $\frac k2$ are not possible. In fact, if we do not add further restrictions and the minimum degree of the host graph $G$ is only bounded by some function $f(k)< \lfloor\frac k2\rfloor$, then no maximum degree bound can make $G$ contain all $k$-edge trees.
In order to see this, it suffices consider $K_{n_1, n_2}$, the complete bipartite  graph with classes of size $n_1:=\lfloor{\tfrac{k-1}{2}\rfloor}$ and $n_2:=n-\lfloor{\tfrac{k-1}{2}\rfloor}$, respectively. No perfectly (or almost perfectly) balanced\footnote{We say that a tree $T$ is perfectly (or almost perfectly) balanced if the bipartition classes of $T$ are of the same size (or differ by one).} $k$-edge tree  embeds into $K_{n_1, n_2}$, since one would need to use at least $\lfloor\tfrac{k+1}{2}\rfloor$ vertices from each class.

One might think that  perhaps the situation changes if we require the minimum degree bound $f(k)$  to be at least as large as the smaller bipartition class of the tree. But that is not true:  Let $k,\ell\in\mathbb N$ such that $\ell +2$ divides $k+1$, and let $T$ be obtained from a $2\frac{k+1}{\ell}$-edge path  by adding $\ell-2$ new leaf neighbours to every other vertex on the path. This tree has bipartition classes of sizes $\frac{1}\ell (k+1)$ and $\frac{\ell -1}\ell (k+1)$. However, it cannot be embedded into  the graph obtained by joining  a universal vertex to a disjoint union of (any number of) complete graphs of order $c:=\lfloor\frac {k-1}2\rfloor$, since for every $v\in V(T)$, at least one component of $T-v$ has  at least $\tfrac k2 >c$ vertices.

\subsection{Higher connectivity}

All the examples from the previous section, as well as Examples~\ref{ex:extremal_4k/3} and~\ref{example2} from Section~\ref{tightness}, have a cutvertex. So one  might think that in a $c$-connected host graph, for some $c\ge2$ which might even depend on $k$, we could cope with lower bounds on the minimum (or maximum) degree.

However, we would not gain much by requiring higher connectivity, as the following variation of Example~\ref{ex:extremal_4k/3}  
 illustrates.

\begin{example}~\label{example_matching}
Let $H_{k,\ell,c}$ be as in Example~\ref{ex:extremal_4k/3}, with slightly adjusted size of the sets $A_i$, namely, we choose
$$|A_i|=(\ell-1)\Big(\frac{k}{\ell}-2\Big)\text{\ and \ }|B_i|=\frac k2+\frac{(c-1)(\ell +1)}2-1.$$
Now, add a matching of size $|B_1|$ between the sets $B_1$ and $B_2$, and call the new graph $H'_{k,\ell,c}$. 
 \end{example}
 
 The graph $H'_{k,\ell,c}$ is $\frac k2$-connected, and
for any given $\gamma$ there is a number $c$ such that
$\delta (H'_{k,\ell,c})\ge (1+\frac 1\ell -\gamma)\frac k2$ and $\Delta (H'_{k,\ell,c})\ge 2(1-\frac 1\ell-\gamma)k$. But, 
 similarly, we can show that  the tree $T_{k, \ell}$ does not embed in  $H'_{k,\ell,c}$.\\ 

\noindent It is not clear what happens if we require a connectivity of  $(1+\eps)\frac k2$, for some $0<\eps \le\alpha$. It is possible that then, the bound on the maximum degree can be weakened, perhaps  to $\Delta(G)\geq 2(1-2\alpha)k$.

\subsection{Is Conjecture~\ref{conj:ell} tight for all values of $\alpha$?}\label{tight?}

Finally, we believe it would be very interesting to generalise Proposition~\ref{prop:ell'} to even $\ell$, if this is possible. Or even better, find examples so that  the term~$\frac 1\ell$ from the proposition can be replaced with any $\alpha\in[0,\frac 13)$.
\begin{question}\label{question}
 Is  Conjecture~\ref{conj:ell} asymptotically tight for all $\alpha\notin\{\frac 1\ell\}_{\ell\ge 3, \ell\text{\ odd}}$?
\end{question}
At least, Conjecture~\ref{conj:ell} is close to tight. Indeed, for any $\alpha\in [0,\tfrac13)$ and given $\gamma>0$ small, we can construct examples of graphs with minimum degree at least $(1+\alpha-\gamma)\tfrac k2$ and maximum degree at least $2(1-g(\alpha)-\gamma)k$, where $g(\alpha)$ is a function which is bigger than~$\alpha$ but reasonably close to it. In particular, $g(\alpha)$ satisfies $|\alpha-g(\alpha)|=O(\alpha^2)$, and, more explicitly, for any even $\ell\ge 3$   we obtain $g(\tfrac1\ell)=\tfrac1{\ell}+\tfrac{1}{\ell(\ell-2)}$. These examples  are very similar to Example~\ref{ex:extremal_4k/3}. The difference is that the small stars that make up the tree may have different sizes (more precisely, one star is smaller than the other ones). The host graph is the same, with slightly adjusted size of the sets $A_i$.

\bibliographystyle{acm}
\bibliography{trees}

\end{document}